\tikzset{snake it/.style={decorate, decoration=snake}}
\tikzstyle{edge}=[very thick]
\definecolor{bostonuniversityred}{rgb}{0.8, 0.0, 0.0}
\definecolor{arsenic}{rgb}{0.23, 0.27, 0.29}
\tikzstyle{diredge}=[postaction={decorate,decoration={markings,
\tikzset{
    arrow/.style={decoration={markings, mark=at position 0.7 with
    {\fill(-0.09*#1,-0.03*#1) -- (0,0) -- (-0.09*#1,0.03*#1) -- cycle;}}, postaction={decorate}},
    arrow/.default=1
}
\tikzset{
    arow/.style={decoration={markings, mark=at position 1 with
    {\fill(-0.09*#1,-0.03*#1) -- (0,0) -- (-0.09*#1,0.03*#1) -- cycle;}}, postaction={decorate}},
    arow/.default=1
}
\tikzset{
    arrrow/.style={decoration={markings, mark=at position 0.9 with
    {\fill(-0.09*#1,-0.03*#1) -- (0,0) -- (-0.09*#1,0.03*#1) -- cycle;}}, postaction={decorate}},
    arow/.default=1
}
\newcommand{\fitellipsis}[2] 
{\draw [fill=white]let \p1=(#1), \p2=(#2), \n1={atan2(\y2-\y1,\x2-\x1)}, \n2={veclen(\y2-\y1,\x2-\x1)}
    in ($ (\p1)!0.5!(\p2) $) ellipse [ x radius=\n2/2+0cm, y radius=1.1cm, rotate=\n1];
}
\newcommand{\Fitellipsis}[2] 
{\draw [fill=white]let \p1=(#1), \p2=(#2), \n1={atan2(\y2-\y1,\x2-\x1)}, \n2={veclen(\y2-\y1,\x2-\x1)}
    in ($ (\p1)!0.5!(\p2) $) ellipse [ x radius=\n2/2+0cm, y radius=1.4cm, rotate=\n1];
}
\theoremstyle{plain}
\newtheorem*{thm*}{Theorem}
\newtheorem{thm}{Theorem}[section]
\Crefname{thm}{Theorem}{Theorems}
\newtheorem*{lem*}{Lemma}
\newtheorem{lem}[thm]{Lemma}
\Crefname{lem}{Lemma}{Lemmas}
\newtheorem*{claim*}{Claim}
\crefname{claim}{Claim}{Claims}
\Crefname{claim}{Claim}{Claims}
\newtheorem{prop}[thm]{Proposition}
\Crefname{prop}{Proposition}{Propositions}
\Crefname{remar}{Remark}{Remarks}
\crefname{cor}{Corollary}{Corollaries}
\newtheorem*{conj*}{Conjecture}
\crefname{conj}{Conjecture}{Conjectures}
\Crefname{qn}{Question}{Questions}
\newtheorem{obs}[thm]{Observation}
\Crefname{obs}{Observation}{Observations}
\Crefname{ex}{Example}{Examples}
\theoremstyle{definition}
\newtheorem{prob}[thm]{Problem}
\Crefname{prob}{Problem}{Problems}
\newtheorem{defn}[thm]{Definition}
\Crefname{defn}{Definition}{Definitions}
\theoremstyle{remark}
\renewenvironment{proof}[1][]{\begin{trivlist}
\item[\hspace{\labelsep}{\bf\noindent Proof#1.\/}] }{\qed\end{trivlist}}
\newcommand{\remove}[1]{}
\newcommand{\eps}{\varepsilon}
\title{\vspace{-0.85 cm}
Pancyclicity of Hamiltonian graphs}
\date{}
\author{
Nemanja Dragani\'c\thanks{
Department of Mathematics, ETH, Z\"urich, Switzerland. Research supported in part by SNSF grant 200021\_196965.
\newline
\emph{Emails}: \textbf{\{nemanja.draganic,david.munhacanascorreia, benjamin.sudakov\}@math.ethz.ch}.
}
\and
David Munh\'a Correia\footnotemark[1]
\and
Benny Sudakov\footnotemark[1]}
\begin{document} 
\maketitle
\begin{abstract}
An $n$-vertex graph is \emph{Hamiltonian} if it contains a cycle that covers all of its vertices, and it is \emph{pancyclic} if it contains cycles of all lengths from $3$ up to $n$. In 1972, Erd\H{o}s conjectured that every Hamiltonian graph with independence number at most $k$ and at least $n = \Omega(k^2)$ vertices is pancyclic. In this paper we prove this old conjecture in a strong form by showing that if such a graph has $n = (2+o(1))k^2$ vertices, it is already pancyclic, and this bound is asymptotically best possible.
\end{abstract}
\section{Introduction}

Hamiltonicity is one of the most central notions in graph theory, and it has been extensively studied by numerous researchers. The problem of deciding Hamiltonicity of a graph is NP-complete and therefore, a central theme in Combinatorics is to derive sufficient conditions for this property. The most classical one is Dirac’s theorem \cite{dirac1952some} which dates back to 1952 and states that every $n$-vertex graph with minimum degree at least $n/2$ contains a Hamilton cycle. Since then, many other interesting results about various aspects of Hamiltonicity have been obtained, see e.g. \cite{ajtai1985first,chvatal1972note,kuhn2013hamilton,krivelevich2011critical,krivelevich2014robust,MR3545109,ferber2018counting, cuckler2009hamiltonian, posa1976hamiltonian}, and the surveys \cite{gould2014recent, MR3727617}.

A related notion to Hamiltonicity is that of pancyclicity. An $n$-vertex graph is said to be \emph{pancyclic} if it contains all cycles of length from $3$ up to $n$. Trivially, pancyclicity is a stronger property than Hamiltonicity, and one might ask how much stronger it really is. In 1973, Bondy \cite{bondy10pancyclic} stated his celebrated meta-conjecture, indicating that the first property should be only slightly stronger than the latter. Indeed, he proposed that any non-trivial condition which implies that a graph is Hamiltonian should also imply that it is pancyclic (up to a certain collection of simple exceptional graphs). As an example, Bondy \cite{bondy1971pancyclic} himself first showed that every $n$-vertex graph with minimum degree at least $n/2$ is either pancyclic or isomorphic to the complete bipartite graph $K_{n/2,n/2}$, thus strengthening Dirac's theorem. His meta-conjecture sparked a lot of research which in turn has led to various appealing results and methods. For example, Bauer and Schmeichel \cite{bauer1990hamiltonian}, relying on previous results of Schmeichel and Hakimi \cite{schmeichel1988cycle}, showed that the sufficient conditions for Hamiltonicity given by Bondy \cite{bondy1980longest}, Chvátal \cite{chvatal1972hamilton} and Fan \cite{fan1984new} all imply pancyclicity, up to a certain small family of exceptional graphs. Furthermore, much like with Dirac's theorem, the classical result of Chvátal and Erdős \cite{chvatal1972hamilton} that a graph with connectivity number $\kappa(G)$ at least as large as its independence number $\alpha(G)$ is Hamiltonian, has also been addressed.
Namely, in 1990, Jackson and Ordaz \cite{jackson1990chvatal}, conjectured that if $\kappa(G) > \alpha(G)$, then $G$ must be pancyclic. An approximate form of this conjecture was proven by Keevash and Sudakov \cite{keevash2010pancyclicity}, who showed that $\kappa(G) \geq 600\alpha(G)$ is sufficient. The authors \cite{draganic2023chv} recently showed an asymptotically optimal result, proving that $\kappa(G) \geq (1+o(1))\alpha(G)$ is enough for pancyclicity. Finally, building on our work, Letzter \cite{letzter2023pancyclicity} proved the Jackson-Ordaz conjecture.

Bondy's meta-conjecture is about conditions for Hamiltonicity which imply pancyclicity. A natural and closely related question was first studied by Erd\H{o}s in the 1970s. Let $G$ be a Hamiltonian graph; under which assumptions can we guarantee that $G$ is also pancyclic or more generally, that it has many cycle lengths? One example of such a problem was suggested by Jacobson and Lehel at the 1999 conference “Paul Erd\H{o}s and His Mathematics”. They asked for the minimal number of cycle lengths in a $k$-regular $n$-vertex Hamiltonian graph. They conjectured (see Verstraëte \cite{verstraete2016extremal} for a stronger conjecture) that already when $k \geq 3$, there are $\Omega(n)$ many lengths. Improving on the previously best known lower bound of $\Omega(\sqrt{n})$ by Milans et al. \cite{milans2012cycle}, recently Buci\'c, Gishboliner and Sudakov \cite{bucic2021cycles} showed that any Hamiltonian graph with minimum degree at least $3$ has $n^{1-o(1)}$ different cycle lengths. 

As we already mentioned above, the earliest question of this flavor was studied by Erd\H{o}s about 50 years ago. In 1972 he posed the following problem in \cite{erdos1972some}. Given an $n$-vertex Hamiltonian graph with independence number $\alpha(G)\leq k$, how large does $n$ have to be in terms of $k$ in order to guarantee that $G$ is pancyclic? Erd\H{o}s \cite{erdos1972some} proved that it is enough to have $n=\Omega(k^4)$ and conjectured that already $n=\Omega(k^2)$ should be enough. A simple construction shows that this is best possible. Let $C_1,\ldots, C_k$ be disjoint
cliques of size $2k-2$, and let each $C_i$ have two distinguished vertices $a_i$ and $b_i$. Let $G$ be the graph obtained
by connecting $a_i$ and $b_{i+1}$ by an edge for each $i$ (taking addition modulo 
$k$). Notice that this graph has $n = 2k^2- 2k$ vertices, is Hamiltonian and its independence number is $k$. On the other hand, it is easy to check that it does not contain a cycle of length $2k - 1$, and thus it is not pancyclic. Indeed, observe that every cycle must be either a subgraph of
one of the cliques $C_i$, or contain all the vertices $a_i,b_i$ for each $i$. The first type of cycles all have
length at most $2k - 2$ and the latter have length at least $2k$.

In the last 50 years, there have been several improvements upon Erd\H{o}s's initial result. Firstly, Keevash and Sudakov \cite{keevash2010pancyclicity} proved that $n=\Omega(k^{3})$ vertices are enough to guarantee pancyclicity. Then, Lee and Sudakov \cite{lee2012hamiltonicity} improved this to $n=\Omega(k^{7/3})$, and more recently Dankovics \cite{dankovics2020low} showed that $n=\Omega(k^{11/5})$ vertices suffice. In this paper we completely resolve the conjecture of Erd\H{o}s in the following strong form.
\begin{thm}\label{thm:main}
Every Hamiltonian graph $G$ with $\alpha(G)\leq k$ and at least $2k^2+o(k^2)$ vertices is pancyclic.
\end{thm}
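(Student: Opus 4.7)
Fix a Hamilton cycle $C = v_1 v_2 \cdots v_n v_1$ in $G$ (indices mod $n$), and call $\min(|i-j|, n-|i-j|)$ the \emph{length} of a chord $v_i v_j$ of $C$. A chord of length $d$ immediately yields cycles of lengths $d + 1$ and $n - d + 1$; two crossing chords $v_{i_1} v_{j_1}$, $v_{i_2} v_{j_2}$ with $i_1 < i_2 < j_1 < j_2$ yield cycles of lengths $(i_2 - i_1) + (j_2 - j_1) + 2$ and $n + 2 - (i_2 - i_1) - (j_2 - j_1)$. The strategy is to produce, for every $\ell \in \{3, \ldots, n - 1\}$, a cycle of length exactly $\ell$, splitting into three ranges of $\ell$.

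For $\ell \le 2k - 2$, and symmetrically for $\ell \ge n - 2k + 4$, I would exploit $\alpha(G) \le k$ locally: any arc of $C$ spanning at least $2k + 2$ vertices must contain an internal chord, for otherwise its alternate vertices would form an independent set of size at least $k + 1$. Iterating this observation across overlapping windows of $C$, together with some careful chord manipulations, should deliver cycles of every length in $\{3, \ldots, 2k - 2\}$ on the short side of the chord and in $\{n - 2k + 4, \ldots, n - 1\}$ on the long side.

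For the middle range $2k \le \ell \le n - 2k + 3$, the plan is to combine pairs of crossing chords. Given the rich supply of chords guaranteed by the previous step, I would locate a pair whose half-length sum equals $\ell - 2$ (or $n - \ell + 2$), which realises any desired cycle length in the middle range. Some bookkeeping is needed to ensure that for each target $\ell$ a suitable pair exists, but the freedom to choose among many chords of many lengths should make this manageable.

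The decisive case is $\ell = 2k - 1$, the unique length missing in the extremal construction and the only place where the hypothesis $n \ge 2k^2 + o(k^2)$ binds tightly. Arguing by contradiction, assume $G$ contains no $C_{2k - 1}$; combined with $\alpha(G) \le k$ this should force $G$ to have essentially the necklace-of-$k$-cliques structure from the introduction, with blobs of size at most $2k - 2$, yielding $n \le k(2k - 2) = 2k^2 - 2k$ and contradicting the hypothesis. The main obstacle lies precisely here: an arbitrary $C_{2k - 1}$-free Hamiltonian graph with $\alpha \le k$ need not a priori resemble the extremal example, so the proof must perform a delicate stability analysis, likely clustering the vertices of $C$ by their chord-adjacency patterns and showing that each cluster is essentially a clique of size at most $2k - 2$. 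This is where I expect the new idea needed to reach the optimal constant $2$ in $2k^2$ to live.
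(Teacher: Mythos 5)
Your proposal sketches a program rather than a proof, and several of the steps have genuine gaps. The paper's actual argument is built on quite different machinery, so let me point to where your plan would break down.

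\textbf{Getting $C_{n-1}$ from local chords does not work.} You argue that any arc of $2k+2$ consecutive vertices of the Hamilton cycle contains a chord, and that iterating across windows gives all lengths in $\{n-2k+4,\ldots,n-1\}$. But a chord of length $d\leq 2k+1$ in a window only yields a cycle of length $n-d+1$, which lies somewhere in $[n-2k,n-1]$; it does not give you a cycle of length exactly $n-1$ unless there is a chord of length $2$, and nothing you have forces one to exist. The paper's central technical device (Proposition~\ref{lem:jumpwithzigzag}) is precisely what fills this gap: it shows, via an argument that builds a ``special edge set'' from a path cover of an auxiliary digraph, that as long as $|P| > 2k^2 + 2k$ one can shorten a path (or cycle) by \emph{exactly} one vertex while keeping its endpoints, and more generally shorten it by at most $4c-3$ while preserving a prescribed vertex set $U$. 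This lemma, not a window-chord observation, is what drives the entire upper range, and nothing in your proposal replaces it.

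\textbf{The middle range is the bulk of the problem and is not addressed.} You allocate $[2k, n-2k+3]$ to a ``pairs of crossing chords'' argument, but this is essentially the whole interval, and realizing an arbitrary target $\ell$ as a half-length sum requires chords of controlled lengths at controlled positions, which you have not established. In the paper, the interval $\bigl[\frac{1000k}{\varepsilon^2},\, n\bigr]$ is handled by combining Proposition~\ref{lem:jumpwithzigzag} (to make the endpoint pair $p$-dense) with Lemma~\ref{lem:partitionintomatchingcycle} and Proposition~\ref{lem:pathlengthsininterval}, and the interval $\bigl[(2+\varepsilon)k,\,\frac{1000k}{\varepsilon^2}\bigr]$ is the hardest part of the proof, requiring induced increasing subpaths of the $P_i$'s, an auxiliary $3$-colored Ramsey argument on the graph $H$, and a careful gluing of path segments via Observation~\ref{obs:combining}. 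None of this is implicit in your sketch.

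\textbf{The case $\ell = 2k-1$ does not require a stability argument.} The paper covers all of $[3,(2+\varepsilon)k]$, including $2k-1$, by invoking cycle-complete Ramsey numbers, in particular the Keevash--Long--Skokan result that $r(C_\ell,K_s) = (\ell-1)(s-1)+1$ for $\ell\geq C\log s/\log\log s$. Since $G$ has no independent set of size $k+1$, taking $s=k+1$ and $\ell = 2k-1$ shows that $n \geq (2k-2)k+1$ already forces a $C_{2k-1}$ -- a threshold that exactly matches the extremal construction, which is why the constant $2$ is attained. You propose instead to argue by contradiction that a $C_{2k-1}$-free graph with $\alpha\leq k$ must resemble the necklace example; this is an alternative route in spirit, but you acknowledge it would require a delicate stability analysis that you do not carry out, and there is no obvious way to perform that clustering directly. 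Identifying that the difficulty near $2k$ is where the constant is decided is a good instinct, but the real difficulty in the paper sits in the slightly larger window $[(2+\varepsilon)k,\frac{1000k}{\varepsilon^2}]$, where Ramsey bounds run out and the path-shortening machinery takes over.

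In short, the proposal identifies the right extremal example and correctly senses that lengths near $2k$ are delicate, but the two engines of the actual proof -- path shortening via special edge sets, and cycle-complete Ramsey numbers -- are absent, and the replacements you suggest (local window chords, crossing-chord bookkeeping, an uncarried-out stability analysis) each have a concrete hole.
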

\noindent As shown by the previous construction, our result is tight up to the $o(k^2)$ error term. The rest of this paper is organized as follows.  In \Cref{sec:preliminarieas}, we state some well-known tools, and introduce some useful definitions. There we also prove the two key propositions that are used in the proof of Theorem~\ref{thm:main}, which is given in \Cref{sec:proof}. Finally, in \Cref{sec:concludingrem} we make some concluding remarks and mention some open questions; we also show how to get a short proof of the conjecture of Erd\H{o}s, that is, a proof of \Cref{thm:main} with a sufficiently large constant $C$ instead of the precise factor of 2.

\section{Preliminaries}\label{sec:preliminarieas}
\subsection{Notation and definitions}
We mostly use standard graph theoretic notation. Let $G$ be a finite graph. Denote by $V(G)$ its vertex set, and let $S_1,S_2\subseteq V(G)$. We denote by $G[S_1]$ the subgraph of $G$ induced by $S_1$, and by $E[S_1,S_2]$ the set of edges with one endpoint in $S_1$ and the other in $S_2$. Let $H$ be a subgraph of $G$. We denote by $G[H]$ the graph $G[V(H)]$.
A path $P=(x_0,x_1,\ldots,x_\ell)$ of length $\ell$ is a graph on vertex set $\{x_0,x_1,\ldots,x_\ell\}$ with an edge between $x_{i-1}$ and $x_{i}$ for all $i\in[\ell]$. We say that $x_0$ and $x_\ell$ are the endpoints of $P$, and we call $P$ an $x_0x_\ell$-path. 
If the vertices of the graph $G$ come with a given ordering, then we say that a path $P=(x_0,x_1,\ldots,x_\ell)$ contained in $G$ is increasing if $x_0<x_1<\ldots<x_\ell$.
We denote by $\alpha(G)$ the independence number of $G$. Given a digraph $D$, its independence number $\alpha(D)$ is defined as the independence number of the underlying graph. 

Given sets $A_1,A_2\subset \mathbb N$, we denote by $A_1+A_2$ the set of integers $c$ such that $c=a_1+a_2$ for some $a_1\in A_1$ and $a_2\in A_2.$ Throughout the paper we omit floor and ceiling signs for clarity of presentation, whenever it does not impact the argument.
\begin{defn}
Let  $a,b,p$ be positive real numbers. Given a graph $G$, and two vertices $x$ and $y$, we say that the pair $xy$ is $p$-dense in the interval $[a,b]$ if for every subinterval $[a',b']$ with $b'-a'\geq 
p$ there is an integer $\ell\in[a',b']$ and an $xy$-path in $G$ of length $\ell$.  
\end{defn}
\subsection{Standard tools}
\noindent Here we state and prove some standard facts which we use in our proof. We start with the following well-known result about directed graphs of Gallai and Milgram \cite{gallai1960verallgemeinerung}. A \emph{path cover} in a directed graph is a partition of its vertex set into directed paths, and its size is the number of such paths.
\begin{lem}[\cite{gallai1960verallgemeinerung}]\label{lem:partition}
Every directed graph $D$ has a path cover of size at most $\alpha(D)$.
\end{lem}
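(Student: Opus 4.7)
I will prove this classical result by induction on $|V(D)|$, with the empty-graph case serving as the trivial base. For the inductive step, the idea is to consider a path cover $\mathcal{P} = \{P_1,\ldots,P_k\}$ of $D$ of minimum size $k$, let $S = \{s_1,\ldots,s_k\}$ be its set of initial vertices (one per path), and show that $S$ is an independent set in the underlying graph of $D$. Since $|S| = k$, this immediately yields $k \le \alpha(D)$, which is the desired bound.

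\textbf{The swap.} To prove $S$ is independent, suppose for contradiction that $D$ contains an arc $s_i \to s_j$ between two distinct initial vertices. I then perform the following swap: replace $P_j$ by the path $(s_i, s_j, \ldots)$ obtained by prepending $s_i$ to $P_j$ via the arc $s_i \to s_j$, and replace $P_i$ by $P_i \setminus \{s_i\}$. If $P_i$ was a singleton $(s_i)$, this operation decreases the total number of paths by one and contradicts the minimality of $|\mathcal{P}|$ at once. Otherwise $P_i$ has length at least one; in this case I apply the induction hypothesis to $D \setminus \{s_i\}$ to obtain a path cover of size at most $\alpha(D \setminus \{s_i\}) \le \alpha(D)$ and then reinsert $s_i$ into the resulting cover — using the arc $s_i \to s_j$ to prepend $s_i$ to the path containing $s_j$ — so as to obtain a path cover of $D$ of size at most $\alpha(D)$, again contradicting $|\mathcal{P}| > \alpha(D)$.

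\textbf{Main obstacle.} The delicate point is the reinsertion step in the second case: after applying the inductive hypothesis to $D \setminus \{s_i\}$, we have no direct control over whether $s_j$ is still an initial vertex of the new cover, nor over the local structure around it. The cleanest way to handle this is to strengthen the inductive hypothesis: assert that every digraph $D$ admits a path cover whose set of initial vertices forms an independent set. With this strengthening, the recursive call returns a cover of $D\setminus\{s_i\}$ compatible with the swap, and $s_i$ can be safely prepended to the appropriate path while preserving independence of the initials. This is essentially the only subtle piece of bookkeeping; the remainder of the argument is routine.
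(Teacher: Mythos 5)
The paper does not prove Lemma~\ref{lem:partition}; it cites Gallai--Milgram \cite{gallai1960verallgemeinerung} as a standard tool. So the only thing to assess is whether your proposal is a correct proof, and unfortunately it is not.

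The central difficulty is that the claim you want to prove as the engine of the induction --- and, in your proposed strengthening, as the induction hypothesis itself --- is false. It is not true that every digraph admits a path cover whose set of initial vertices is independent, nor that the initial vertices of a minimum path cover must be independent. Take $D$ with vertex set $\{a,b,c,d\}$ and arcs $a\to b$, $a\to c$, $c\to d$. Here $\alpha(D)=2$ and the minimum path covers are $\{(a,b),(c,d)\}$ and $\{(a,c,d),(b)\}$, with initial sets $\{a,c\}$ and $\{a,b\}$ respectively --- both contain an arc. One can check by exhausting all path covers of $D$ (there are only a handful) that every one of them has two adjacent initial vertices. So the ``strengthened'' invariant you propose cannot be maintained, and the argument collapses at the very first step: you cannot deduce $S$ is independent, and the swap need not terminate in a contradiction. (In your example, swapping along $a\to c$ turns $\{(a,b),(c,d)\}$ into $\{(b),(a,c,d)\}$, whose initials $\{a,b\}$ are again adjacent.)

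Even setting that aside, the reinsertion step has a second, independent gap. After applying the induction hypothesis to $D\setminus\{s_i\}$, you get \emph{some} path cover of that smaller digraph, but nothing forces $s_j$ to be an initial vertex in it --- $s_j$ may sit in the interior of a path --- so ``prepend $s_i$ to the path containing $s_j$'' is not an available move. And even when $s_j$ does happen to be an initial vertex, replacing it by $s_i$ can destroy independence of the initial set, since $s_i$ may be adjacent to other initials. The classical proof of Gallai--Milgram avoids all of this by working with a more delicate invariant: roughly, one shows by induction that from any path partition $\mathcal{P}$ whose terminal (equivalently, by reversal, initial) vertices are \emph{not} independent one can produce a partition $\mathcal{P}'$ with strictly fewer paths and $\mathrm{ter}(\mathcal{P}')\subseteq\mathrm{ter}(\mathcal{P})$; it is precisely the containment $\mathrm{ter}(\mathcal{P}')\subseteq\mathrm{ter}(\mathcal{P})$ that makes the reinsertion go through, and that containment is the ingredient missing from your proposal.
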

\noindent We also use the celebrated Ramsey's theorem.
\begin{thm}\label{lem:ramsey}
For every two positive integers $k,t$, there exists a large enough integer $n$, such that for any $k$-coloring of the edges of $K_n$, there is a monochromatic copy of $K_t$ in $K_n$.\end{thm}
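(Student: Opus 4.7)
The plan is to prove the theorem by induction on the number of colors $k$, with the two-color case serving as the engine and the multicolor case following by grouping colors.

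For the two-color case, I would prove the slightly stronger quantitative statement that there is a function $R(s,t)$ such that every $2$-coloring of $E(K_{R(s,t)})$ contains either a red $K_s$ or a blue $K_t$, satisfying $R(s,t)\le R(s-1,t)+R(s,t-1)$ with base cases $R(1,t)=R(s,1)=1$. The inductive step is the standard Erd\H{o}s--Szekeres neighborhood argument: in a complete graph on $R(s-1,t)+R(s,t-1)$ vertices, pick any vertex $v$; by pigeonhole, either $v$ has at least $R(s-1,t)$ red neighbors or at least $R(s,t-1)$ blue neighbors. In the red case, the inductive hypothesis applied to the red neighborhood of $v$ yields either a blue $K_t$ (and we are done) or a red $K_{s-1}$ which together with $v$ forms a red $K_s$; the blue case is symmetric. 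Setting $n := R(t,t)$ then handles the $k=2$ case of \Cref{lem:ramsey}.

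For the induction on $k$, the base case $k=1$ is trivial (take $n=t$). For the inductive step from $k$ to $k+1$, given a $(k+1)$-coloring of $E(K_n)$ I would merge colors $k$ and $k+1$ into a single ``super-color'' to produce an auxiliary $k$-coloring. Setting $T := R(t,t)$ from the two-color case and choosing $n$ large enough so that the inductive $k$-color Ramsey bound delivers a monochromatic $K_T$, we split into cases on the color of this $K_T$. If it is one of the original colors $1,\ldots,k-1$, then since $T\ge t$ we already have the desired monochromatic $K_t$. Otherwise the $K_T$ is monochromatic in the merged super-color, meaning all of its edges carry colors only in $\{k,k+1\}$; invoking the two-color case inside this $K_T$ then produces a monochromatic $K_t$ in one of these two colors, completing the induction.

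I do not expect any genuine obstacle: the entire argument is a routine double induction whose only non-trivial ingredient is the pigeonhole neighborhood step of Erd\H{o}s--Szekeres, and the resulting bound on $n$ (a tower in $k$) is immaterial since the statement requires only existence.
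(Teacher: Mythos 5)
Your proof is correct and is the standard textbook argument (Erd\H{o}s--Szekeres neighborhood pigeonhole for two colors, then color-merging induction for $k$ colors). Note, however, that the paper itself gives no proof of this statement: it is simply quoted as ``the celebrated Ramsey's theorem,'' a classical tool, so there is no paper proof to compare against; your write-up supplies a correct self-contained proof of that cited result.
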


\noindent The next lemma shows that we can partition a large proportion of the vertex set of a graph into sets with small radius, such that there are no edges between the parts.
\begin{lem}\label{lem:BFSpartition}
Let $G$ be an $n$-vertex graph and let $0<\gamma <\frac{1}{2}$. Then, there exists a collection of vertices $v_1, v_2, \ldots, v_r$ and disjoint sets $U_1, U_2, \ldots, U_r \subseteq V(G)$ such that the following hold.
\begin{enumerate}
    \item $v_j \in U_j$ for all $j$, and $\left| \bigcup_j U_j \right| \geq (1-\gamma)n$.
    \item Every vertex $u \in U_j$ has $\text{dist}(v_j,u) \leq \log_{1+\gamma} n$.
    \item There is no edge between any two sets $U_j, U_{j'}$ with $j \neq j'$.

\end{enumerate}
\end{lem}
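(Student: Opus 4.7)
The plan is to construct the sets $U_1, U_2, \ldots$ iteratively by repeated breadth-first search, exploiting the standard fact that a BFS ball whose size multiplies by $(1+\gamma)$ at every step cannot survive longer than $\log_{1+\gamma} n$ rounds without exceeding $n$ vertices. More concretely, set $V_1 := V(G)$; at step $t$, if $V_t$ is nonempty, pick any $v_t \in V_t$ and run BFS from $v_t$ inside the induced subgraph $G[V_t]$. Writing $B_i^{(t)}$ and $N_i^{(t)}$ for the ball and sphere of radius $i$ in this BFS, I let $i_t$ be the smallest nonnegative integer with $|N_{i_t+1}^{(t)}| \leq \gamma |B_{i_t}^{(t)}|$, and then put $U_t := B_{i_t}^{(t)}$ and $V_{t+1} := V_t \setminus B_{i_t+1}^{(t)}$, so that the boundary shell $N_{i_t+1}^{(t)}$ is discarded along with $U_t$.

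The growth argument shows $i_t \leq \log_{1+\gamma} n$: otherwise $|B_i^{(t)}| > (1+\gamma)^i$ for every $i$ up to $\log_{1+\gamma} n$, eventually exceeding $n$. This immediately yields property~2, since distances in $G$ are bounded by distances in $G[V_t]$. For property~3, take $t < t'$; then $U_{t'} \subseteq V_{t+1} = V_t \setminus B_{i_t+1}^{(t)}$, so any $G$-edge between $U_t$ and $U_{t'}$ has both endpoints in $V_t$ and hence lies in $G[V_t]$; by the BFS structure its endpoint outside $U_t$ must lie in the shell $N_{i_t+1}^{(t)}$, contradicting $U_{t'} \cap N_{i_t+1}^{(t)} = \emptyset$. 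Finally, for property~1 note that each step discards at most $|N_{i_t+1}^{(t)}| \leq \gamma|U_t|$ vertices, so $(1+\gamma)\sum_t |U_t| \geq n$, which gives $\left|\bigcup_t U_t\right| \geq n/(1+\gamma) \geq (1-\gamma)n$ using $(1-\gamma)(1+\gamma) \leq 1$.

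There is no serious obstacle---this is the standard \emph{BFS until growth stops} scheme. The only subtlety worth attention is that the BFS at step $t$ runs inside the shrinking subgraph $G[V_t]$ rather than inside $G$, but this is harmless: any $G$-edge between two surviving sets $U_t, U_{t'}$ automatically lives in $G[V_t]$, where the BFS cut-off already controls it, and any $G$-edge from $U_t$ to an earlier discarded shell is irrelevant to the three conclusions of the lemma.
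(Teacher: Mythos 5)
Your proposal is correct and follows essentially the same route as the paper: iteratively run BFS in the remaining subgraph, stop at the first level where the next shell is at most a $\gamma$-fraction of the current ball, take that ball as $U_t$, and discard both the ball and its boundary shell before recursing. The growth estimate for property~2, the shell-separation argument for property~3, and the counting for property~1 all match the paper's reasoning (the paper bounds the discarded vertices directly by $\gamma n$ rather than via $n/(1+\gamma)$, but this is an immaterial variation).
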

\begin{proof}
We find the required sets and vertices with the following algorithm. We start with an arbitrary vertex $v_1 \in V(G)$ and consider the breadth-first-search tree rooted at $v_1$, that is, consider the sets $V^{(1)}_0,V^{(1)}_1,V^{(1)}_2, \ldots \subseteq V(G)$ defined as $V^{(1)}_i := \{u \in G : \text{dist}(v_1,u) = i\}$. Now, define $i_1 \geq 0$ to be the minimal $i$ such that $|V^{(1)}_{i+1}| \leq \gamma \left|V^{(1)}_0 \cup \ldots \cup V^{(1)}_i \right|$ and let $U_1 := V^{(1)}_0 \cup \ldots \cup V^{(1)}_{i_1}$. We now continue the process and do the same on the graph $G' := G \setminus \left(U_1 \cup V^{(1)}_{i_1 + 1}  \right)$. More precisely, take an arbitrary vertex $v_2 \in G'$ and consider again the breadth-first-search tree in $G'$ rooted at $v_2$. Like before, this gives an $i_2$ and sets $V^{(2)}_0, \ldots, V^{(2)}_{i_2+1}, U_2$. We then repeat this on the graph $G'' := G' \setminus \left(U_2 \cup V^{(2)}_{i_1 + 1}  \right)$ and continue doing this until we have no vertices left. By construction, the desired properties hold. Indeed, the only vertices not contained in $\bigcup_j U_j$ are in some $ V^{(j)}_{i_j+1}$, hence there are at most $\gamma n$ of them. For the second property, observe that each $U_j$ is of size at least $(1+\gamma)^{i_j}$, so 
$i_j\leq \log_{1+\gamma}n$. The third condition holds by construction, since we deleted all the neighbors of $U_i$ before defining $U_{i+1}$ .
\end{proof}
\noindent Finally, we state a trivial observation used throughout the proof of Theorem \ref{thm:main}. It will be used to state that appropriate combinations of internally vertex-disjoint paths of different lengths result in the construction of cycles of many different lengths.
\begin{obs}\label{obs:combining}
Let $G$ be a graph whose vertex set contains $t$ disjoint sets $S_1,\ldots, S_t$ and another set of $t$ vertices $v_1,\ldots,v_t$ outside of $\bigcup_{i=1}^t S_i$. For each $i\in[t]$, let $A_i\subset \mathbb N$ and suppose that for every $i$ the induced subgraph $G[v_i\cup S_i\cup v_{i+1}]$ is such that it contains a $v_iv_{i+1}$-path of length $\ell$ for each $\ell\in A_i$ (with $v_{t+1}=v_1$). Then for every $\ell\in A_1+\ldots+A_t$, the graph $G$ contains a cycle of length $\ell$.
\end{obs}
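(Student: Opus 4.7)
The plan is to prove this by an explicit concatenation construction. Fix any $\ell \in A_1 + \ldots + A_t$, and by definition of the sumset write $\ell = \ell_1 + \ldots + \ell_t$ with $\ell_i \in A_i$. By hypothesis, for each $i \in [t]$ there is a $v_iv_{i+1}$-path $P_i$ of length $\ell_i$ contained in $G[v_i \cup S_i \cup v_{i+1}]$ (indices mod $t$, so $v_{t+1} = v_1$). The first step is to glue these $t$ paths together, in the cyclic order $P_1, P_2, \ldots, P_t$, and argue that the resulting walk is in fact a cycle of length $\ell_1 + \ldots + \ell_t = \ell$.

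The only thing to verify is that the concatenation visits no vertex twice (apart from the natural closing-up at $v_1$). This is where the disjointness hypotheses on the sets $S_i$ and on $\{v_1, \ldots, v_t\}$ enter. Since $P_i$ lies in the induced subgraph on $\{v_i\} \cup S_i \cup \{v_{i+1}\}$ and its endpoints are $v_i$ and $v_{i+1}$, every internal vertex of $P_i$ must lie in $S_i$. As the $S_i$ are pairwise disjoint and disjoint from $\{v_1, \ldots, v_t\}$, the internal vertex sets of distinct $P_i$'s do not intersect each other or the collection of endpoints $v_j$, so the only shared vertices between consecutive paths $P_i$ and $P_{i+1}$ are the common endpoint $v_{i+1}$ that we intend to identify. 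Hence the concatenation is a cycle of the desired length $\ell$, and since $\ell$ was an arbitrary element of $A_1 + \ldots + A_t$, this completes the argument. There is no real obstacle here: the observation is essentially a bookkeeping statement about sumsets of path lengths, and the vertex-disjointness built into the hypothesis is exactly what makes the gluing legal.
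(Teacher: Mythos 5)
Your proof is correct and is exactly the intended argument; the paper states this as a trivial observation and gives no proof at all, so the concatenation-and-disjointness reasoning you spell out is precisely what is implicitly being invoked.
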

\subsection{Finding consecutive path lengths}
Next we show that in a graph with small independence number, we can find two vertices between which there exist paths of almost all `possible' lengths. We believe that this result is of independent interest and pose a problem related to it after its proof.
\begin{prop}\label{lem:pathlengthsininterval}
Let $G$ be an $n$-vertex graph with $\alpha(G) = k$ and let $0<\gamma <1/2$. Then there exist two vertices $u,v \in V(G)$ such that for every integer $\ell\in[\log_{1+\gamma}n, (1-\gamma)\frac{n}{k}]$ there is a $uv$-path of length $\ell$.
\end{prop}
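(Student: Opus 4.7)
The plan is to first apply Lemma~\ref{lem:BFSpartition} to isolate a large, small-diameter subgraph of $G$, and then construct all the required paths inside it using Lemma~\ref{lem:partition} together with local modifications.

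First, apply Lemma~\ref{lem:BFSpartition} to $G$ with parameter $\gamma$, obtaining clusters $U_1,\ldots,U_r$ with centers $v_j\in U_j$, no edges between distinct clusters, and $\left|\bigcup_j U_j\right|\geq(1-\gamma)n$. Since there are no edges between the clusters, picking one vertex per cluster gives an independent set, so $\{v_1,\ldots,v_r\}$ is independent in $G$ and therefore $r\leq\alpha(G)=k$. Hence the largest cluster $U:=U_{j^*}$ satisfies $|U|\geq(1-\gamma)n/k$; moreover, every vertex of $U$ is at distance at most $d:=\log_{1+\gamma}n$ from $v_{j^*}$ in $G[U]$, so $G[U]$ has diameter at most $2d$. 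The required pair $u,v$ will be found inside $U$.

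Second, build a long backbone path in $G[U]$. Apply Lemma~\ref{lem:partition} to an orientation of $G[U]$ to get a directed path cover by at most $\alpha(G[U])\leq k$ paths $Q_1,\ldots,Q_s$. Using the small diameter, stitch these into a single long path $P$ in $G[U]$ by linking the end of $Q_i$ to the start of $Q_{i+1}$ via a path of length at most $2d$, deleting a handful of vertices as needed to keep the result a simple path. The total loss is only $O(kd)=O(k\log n)$, so the stitched path $P=(u,x_1,\ldots,x_{L-1},v)$ has length $L$ close to $|U|\geq(1-\gamma)n/k$. Take its endpoints as our candidate pair.

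Third, realize $uv$-paths of every length $\ell\in[d,L]$ by modifying $P$. The path $P$ itself gives $\ell=L$; shorter lengths come from chord-skipping (an edge $x_ix_j$ with $j>i+1$ produces the $uv$-path $(u,\ldots,x_i,x_j,\ldots,v)$ of length $L-(j-i-1)$) and from replacing subsegments of $P$ by alternative paths in $G[U]$ whose existence and length control is guaranteed by the small diameter. Combined with P\'osa-style rotations at the endpoints, these moves should realize every integer length in the target interval.

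The main obstacle is the third step: producing a $uv$-path of \emph{every} integer length, not merely a dense subset, while always keeping the path simple. This requires coordinating chord skips with detours so that the achievable length set has no gaps, and likely uses the density forced by $\alpha(G[U])\leq k$ to guarantee enough chords and alternative routes at every scale. A secondary difficulty is ensuring the stitched backbone actually attains length at least $(1-\gamma)n/k$; the $O(k\log n)$ loss is negligible when $n\gg k^2\log n$ but may require a refined gluing (folding the connector paths into the $Q_i$'s rather than appending them) in borderline regimes.
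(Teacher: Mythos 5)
Your first step matches the paper, and your observation that $\{v_1,\ldots,v_r\}$ is independent (so $r\leq k$) is correct, but it is also unnecessary and signals a structural deviation: the paper never isolates the largest cluster. Instead, the paper applies Gallai--Milgram directly to $H=G[U_1\cup\cdots\cup U_r]$ (which has $\geq(1-\gamma)n$ vertices and $\alpha(H)\leq k$), so some single directed path in the cover already has $\geq(1-\gamma)n/k$ vertices; since there are no inter-cluster edges, that path automatically lies inside one cluster. By restricting to $G[U]$ first and then covering, you only get paths of length $\geq |U|/k\geq(1-\gamma)n/k^2$, which forces you to stitch $\Theta(k)$ pieces. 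Your stitching step is not sound as stated: the connector paths (of length up to $2d$ each) can revisit vertices of several $Q_i$'s, and ``deleting a handful of vertices'' to restore simplicity can disconnect large portions of later $Q_i$'s, so there is no clean $O(kd)$ loss bound.

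The more serious gap is the one you flag yourself: step three. Chord-skipping, unspecified detours, and P\'osa rotations do not obviously fill every integer length, and P\'osa rotations move endpoints, which is exactly what you cannot afford here since the statement fixes a pair $u,v$. The paper's crucial idea, which your proposal lacks entirely, is to \emph{orient the edges of $H$ by BFS distance from the cluster center} (an edge $xy$ in $U_j$ points from the vertex closer to $v_j$ to the one farther, ties arbitrary). Any directed path $x_1\to\cdots\to x_m$ produced by Gallai--Milgram is then nondecreasing in distance from $v_j$, and consecutive distances $d_i=\mathrm{dist}(v_j,x_i)$ satisfy $d_i\leq d_{i+1}\leq d_i+1$. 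This lets the paper take, for each $i$, the $v_jx_m$-path $P_i$ formed by a shortest $v_jx_i$-path followed by $x_{i+1}\cdots x_m$; the two pieces are automatically vertex-disjoint by monotonicity, and $|P_i|$ changes by at most $1$ as $i$ increases. Since $|P_1|=d_1+m$ and $|P_m|=d_m\leq\log_{1+\gamma}n$, this immediately realizes every length in $[\log_{1+\gamma}n,m]$ with no gaps. Without the distance orientation you have no monotonicity, no disjointness guarantee, and no ``$\pm 1$'' length control, which is precisely the obstacle you could not overcome.
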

\begin{proof}
First, we apply Lemma \ref{lem:BFSpartition} to $G$ to get vertices $v_i$ and sets $U_i$ for all $i\leq r$. Fix the graph $H := G[U_1 \cup \ldots \cup U_r]$ and let us orient the edges of $H$ in the following manner. For an edge $xy$ in $H$ with $x,y \in U_j$ (recall property \textit{3} of the sets $U_j$) orient it as $x \rightarrow y$ if $\text{dist}(v_j,x) < \text{dist}(v_j,y)$ and as $y \rightarrow x$ if $\text{dist}(v_j,x) > \text{dist}(v_j,y)$. In the case that $\text{dist}(v_j,x) = \text{dist}(v_j,y)$, orient the edge arbitrarily.
Now, since $\alpha(H) \leq \alpha(G) = k$ and $|H| \geq (1-\gamma)n$, by Lemma \ref{lem:partition} there must exist a directed path $\overrightarrow{P} = x_1 \rightarrow x_2 \rightarrow \ldots \rightarrow x_m$ in $H$ of length $m := \frac{|H|}{\alpha(H)} \geq (1-\gamma)\frac{n}{k}$. Let $j$ be such that $\overrightarrow{P} \subseteq U_j$, and for each $i\in[m]$ denote $d_i:=\text{dist}(v_j,x_i)$ and note that $d_i\leq \log_{1+\gamma}n$. 

Now we show that for all $\ell \in \left[d_m, m + d_1 \right] \supseteq \left[\log_{1+\gamma} n, m\right]$ there is path in $G$ of length $\ell$ between $v_j$ and $x_m$. For each $i\in[m]$ look at the $v_jx_m$-path $P_i$ obtained by concatenating the shortest $v_jx_i$-path with the path $x_{i+1}x_{i+2}\ldots x_m$. Since by definition, we have that $d_i \leq d_{\ell}$ for $i<\ell$, these two paths are vertex disjoint and their union is indeed a path as well. Moreover, for each $i$, we have that $|P_i|-1\leq|P_{i+1}|\leq |P_i|$ since $x_i \rightarrow x_{i+1}$ is an edge and thus, $d_i \leq d_{i+1} \leq d_i +1 $. Because $|P_1|=d_1+m$ and $|P_m|=d_m$, each path length in $\left[d_m, m + d_1 \right]$ is then attained by at least one of the constructed paths. Since $d_m\leq \log_{1+\gamma}n$ and $m \geq (1-\gamma)\frac{n}{k}$, this finishes the proof.
\end{proof}
Before moving on to the next section, it is worth noting that the above proposition is asymptotically tight. Indeed, an $n$-vertex graph $G$ with $\alpha(G) = k$ does not need to contain a path of length larger than $\frac{n}{k}$, as we can see from a disjoint union of cliques of size $n/k$.
Since the proof of this proposition uses a result about directed paths, we further ask if the following directed variant of it might be true as well.
\begin{prob}
Does Proposition \ref{lem:pathlengthsininterval} generalize to directed graphs?  If $G$ is a directed graph, how large an interval $I \subseteq [0,\frac{n}{k}]$ can we guarantee for which there are vertices $u,v$ with a directed $uv$-path of length $\ell$ for all $\ell \in I$?
\end{prob}
\subsection{Path shortening}

In this section we show that if a graph has small independence number and contains a long path $P$, then we can find a slightly shorter path $P'$ with the same endpoints, which satisfies certain additional properties. In a graph with independence number $k$, a path can clearly be shortened by considering $2k+1$ consecutive vertices on the path, and observing that there must be an edge (not contained in the path) between those vertices. This is the statement of the next simple lemma.

\begin{lem}\label{lem:easyjump}
Let $G$ be a graph with independence number $k$, and $P$ a path in $G$ with endpoints $x,y$ such that $|P| > 2k$. Then, there is an 
$xy$-path $P'$ with $V(P')\subseteq V(P)$ such that $|P| - 2k \leq |P'| < |P|$.
\end{lem}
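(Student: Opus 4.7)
The plan is to produce $P'$ by a single ``chord shortcut'' on $P$. Write $P = (x_0, x_1, \ldots, x_\ell)$ with $x_0 = x$, $x_\ell = y$, and $\ell = |P| > 2k$. I want to find an edge $x_i x_j$ of $G$ with $i < j$ and $j - i \leq 2k+1$ and $j - i \geq 2$; this chord is not an edge of $P$, and replacing the subpath $x_i, x_{i+1}, \ldots, x_j$ by the single edge $x_i x_j$ yields an $xy$-path on $V(P) \setminus \{x_{i+1}, \ldots, x_{j-1}\}$ of length $\ell - (j - i - 1)$, which lies in the interval $[\ell - 2k, \ell - 1]$ exactly when $j - i \in [2, 2k+1]$.

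To locate such a chord I would look at the $k+1$ vertices $x_0, x_2, x_4, \ldots, x_{2k}$, which exist because $\ell \geq 2k+1$ (recall $\ell > 2k$). No two of them are adjacent along $P$, so any edge of $G$ between two of them is a genuine chord. Since $|\{x_0, x_2, \ldots, x_{2k}\}| = k+1 > \alpha(G)$, the set cannot be independent, so there must exist indices $0 \leq i < j \leq k$ with $x_{2i} x_{2j} \in E(G)$. This chord satisfies $2 \leq 2j - 2i \leq 2k$, which is comfortably within the allowed window $[2, 2k+1]$.

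Applying the shortcut with this edge gives the path
\[
P' = (x_0, x_1, \ldots, x_{2i}, x_{2j}, x_{2j+1}, \ldots, x_\ell),
\]
whose length is $\ell - 2(j - i) + 1$. Since $1 \leq j - i \leq k$, this length lies in $[\ell - 2k + 1,\, \ell - 1] \subseteq [|P| - 2k,\, |P| - 1]$, and both $x$ and $y$ remain endpoints (the former because $2i \geq 0$, the latter because $2j \leq 2k < \ell$). There is no real obstacle here; the only thing to be mindful of is the pigeonhole counting — using only $k+1$ vertices at even positions rather than all $2k+1$ consecutive vertices is what guarantees the chord is not already a path edge, and what forces its length to be even and at most $2k$.
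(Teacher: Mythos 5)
Your proof is correct and is exactly the argument the paper sketches — the paper gives no formal proof of this lemma, only the one-sentence remark preceding it (consider $2k+1$ consecutive vertices, use $\alpha(G)=k$ to find a chord among them, and shortcut). Restricting to the $k+1$ even positions is a clean way to guarantee the pigeonholed edge is automatically a chord with span in $[2,2k]$, and your reading of ``on the same vertex set $V(P)$'' as $V(P')\subseteq V(P)$ is the intended one, as shown by how the paper later invokes the lemma.
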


\noindent The following proposition is one of the central results of this paper. It shows that we can shorten a path only by a little while also preserving a pre-specified set of vertices in the newly obtained path.

\begin{prop}\label{lem:jumpwithzigzag}
Let $G$ be an $n$-vertex graph with independence number $k$, and let $P$ be a path in $G$ with endpoints $x,y$. Further, let $c \in \mathbb{N}$ and $U \subseteq V(P)$, where $c \left(\frac{|P|-(4c-1)|U|}{2k}-1 \right)>k$. Then there is an 
$xy$-path $P'$ with the following properties.
\begin{enumerate}
    \item $U \subseteq V(P') \subseteq V(P)$.
    \item $|P| - (4c-3) \leq |P'| < |P|$.
\end{enumerate} 
\end{prop}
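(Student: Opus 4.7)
I would look for a single chord $v_av_b\in E(G)$ with $b-a\le 4c-2$ and $\{v_{a+1},\dots,v_{b-1}\}\cap U=\emptyset$, and take $P'$ to be the path obtained from $P$ by replacing the segment $v_a,v_{a+1},\dots,v_b$ with the edge $v_av_b$. Such a chord immediately delivers $|P'|=|P|-(b-a-1)\in[|P|-(4c-3),|P|-1]$ together with $V(P')\subseteq V(P)$ and $U\subseteq V(P')$, so the whole task reduces to proving existence of one such chord.

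To locate it, label $V(P)=\{v_0,\dots,v_{|P|}\}$ and, for each $u\in U$ at position $p$, reserve a buffer of the $2c-1$ positions on each side of $p$. This blocks at most $(4c-1)|U|$ positions, leaving at least $|P|+1-(4c-1)|U|$ ``free'' vertices, partitioned into at most $|U|+1$ maximal free sub-paths of $P$. I would then split each free sub-path into disjoint windows of $2k+1$ consecutive vertices. Since any $2k+1$ chord-free consecutive vertices would force an independent set of size $k+1$ by taking alternating positions, each window contains a chord; this chord has path-span at most $2k$ and, because the window lies inside a buffered free region, its interior vertices avoid $U$. The hypothesis $c\bigl(\tfrac{|P|-(4c-1)|U|}{2k}-1\bigr)>k$ is precisely tailored so that the number of such windows exceeds $k/c$, producing more than $k/c$ chords avoiding $U$ in their span.

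If any one of these chords has span at most $4c-2$, we are done. Otherwise every chord found has span at least $4c-1$, which in fact means that \emph{no} chord of span at most $4c-2$ exists anywhere in the free region; equivalently, any $4c-1$ consecutive free vertices induce a path in $G$ and so contribute $2c$ pairwise non-adjacent vertices via alternating positions. Combining such $2c$-sized independent sets across more than $k/c$ carefully chosen windows, one should be able to assemble an independent set of size exceeding $k$ in $G$, contradicting $\alpha(G)\le k$. The hard part is exactly this last step: vertices taken from different windows can still be joined by chords of span $\ge 4c-1$, which our assumption does not forbid, so the challenge is to pick the windows and their alternating vertices so that no such long cross-window chord ever becomes an edge between two chosen vertices. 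I expect this to be handled by an iterative/greedy extraction over the $>k/c$ available windows, possibly bootstrapped by applying the Gallai--Milgram lemma (Lemma~\ref{lem:partition}) to an auxiliary digraph whose edges record the long chords, so that an independent transversal of the windows emerges.
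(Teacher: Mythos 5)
There is a genuine gap, and you essentially flag it yourself: the ``hard part'' you defer at the end is not a technical detail but the entire content of the proposition, and the direction you sketch for it cannot succeed.

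Your plan reduces the proposition to finding a single chord of span at most $4c-2$ whose interior avoids $U$. But such a chord need not exist. Take $U=\emptyset$, $c=1$, and let $G$ on the path $(1,2,\dots,n)$ have as chords exactly the pairs $\{i,j\}$ with $|i-j|\ge 3$. Then $\alpha(G)=2$, so the hypothesis $c\bigl(\tfrac{|P|}{2k}-1\bigr)>k$ holds for every $n>12$, yet there is no chord of span $\le 4c-2=2$. Your fallback plan, assuming all chords have span $\ge 4c-1$ and then trying to assemble an independent set of size $>k$ (directly, or via Gallai--Milgram on the ``long chord'' digraph), is therefore aiming at a contradiction that simply isn't there: $\alpha(G)=2\le k$ is perfectly consistent with the absence of short chords. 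No amount of clever window selection or independent-transversal machinery can fix this, because the desired contradiction is false in general.

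The paper's proof avoids this trap by not insisting on a short chord. It orders $V(P)$, deletes consecutive-position edges, applies Gallai--Milgram to get a path cover of size $\le 2k$, and refines these paths into maximal ``special edge sets,'' i.e.\ zig-zag chains of chords of the form $(v_i, v_{i+1}+1)$ (Lemma~\ref{lem:zigzag}). From a long special vertex sequence it places a $c$-element block $S_v=\{v-1,v-3,\dots,v-(2c-1)\}$ just before each special vertex $v$; their union $S$ exceeds size $k$ under the hypothesis, so some edge $ab$ lies inside $S$. Crucially, $ab$ may have \emph{large} span. The special edge set provides a pair of interleaved increasing routes between the blocks containing $a$ and $b$, and splicing $ab$ into these two routes yields an $xy$-path that re-enters the skipped region and recovers almost all of it, losing only at most $1+(2c-2)+(2c-2)=4c-3$ vertices. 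This ``long chord plus zig-zag detour'' mechanism is the key idea your proposal is missing; your windows-of-$2k+1$ argument only reproduces the weaker Lemma~\ref{lem:easyjump} (shortening by up to $2k$), not the fine control by $4c-3$ that the proposition requires.
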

\noindent Before we give a proof of Proposition~\ref{lem:jumpwithzigzag}, we introduce some useful concepts. The key idea to prove the proposition is to find a certain structure in our graph which can be used to shorten a path in a graph with low independence number. The properties of this structure are captured by the notion of a \emph{special edge set}, defined below.

\begin{defn}\label{def:special}
Given a graph $H$ with ordered vertex set $(1,2,\ldots, n)$, we say that a sequence of vertices $(v_1,\ldots,v_{\ell})$ is \emph{special} if the following hold (see Figure \ref{fig:special set}).
\begin{itemize}
    \item  $v_{i+1} > v_{i}$ for all $i\in[\ell-1]$.
    \item $(v_{i},v_{i+1}+1)$ is an edge in $H$ for all $i\in [\ell-1]$. We call the set formed by those edges a \emph{special edge set}.
\end{itemize}
\end{defn}

\begin{figure}[ht]
    \centering
    \begin{tikzpicture}[scale=1.5,main node/.style={circle,draw,color=black,fill=black,inner sep=0pt,minimum width=3pt}]
        \tikzset{cross/.style={cross out, draw=black, fill=none, minimum size=2*(#1-\pgflinewidth), inner sep=0pt, outer sep=0pt}, cross/.default={2pt}}
	\tikzset{rectangle/.append style={draw=brown, ultra thick, fill=red!30}}

	    \foreach \i in {1,...,46}
	    {
	        \node[main node, scale=0.4] (aux) at (\i*0.2-0.6,0){};
	    }
	    \node[rectangle, color=red, scale=0.4] (a) at (0,0) [label=below:$v_1$]{};
	    \node[rectangle, color=red, scale=0.4] (a1) at (2,0)[label=below:$v_2$]{};
	    \node[rectangle, color=red, scale=0.4] (a2) at (3,0)[label=below:$v_3$]{};
	    \node[rectangle, color=red, scale=0.4] (a3) at (5.4,0)[label=below:$v_4$]{};
	     \node[rectangle, color=red, scale=0.4] (a4) at (8,0)[label=below:$v_5$]{};
	    \node[main node] (a4) at (8.2,0){};
	    
	    \node[main node] (b1) at (2.2,0){};
	    \node[main node] (b2) at (3.2,0){};
	    \node[main node] (b3) at (5.6,0){};
	    
	    \draw[line width= 1 pt] (a) to [bend left=60](b1);
	    \draw[line width= 1 pt] (a1) to [bend left=60](b2);
	    \draw[line width= 1 pt] (a2) to [bend left=60](b3);
	    \draw[line width= 1 pt] (a3) to [bend left=60](a4);

    \end{tikzpicture}
    \caption{An illustration of a special edge set. The special vertices are colored red.}
    \label{fig:special set}
\end{figure}
\noindent We can now find special vertex sequences in graphs in the following manner.
\begin{lem}\label{lem:zigzag}
Let $H$ be a graph on the vertex set $[n]$, let $U' \subseteq [n]$ and suppose $H$ has independence number $k$. Then, there exist a special vertex sequence with at least $\frac{n-|U'|}{2k}$ vertices in $[n]\setminus U'$.
\end{lem}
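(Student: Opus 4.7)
The plan is to reduce the problem to finding a long directed path in an auxiliary digraph and then apply Gallai--Milgram. Define a digraph $D$ with vertex set $[n]\setminus U'$ and arcs $a \to b$ whenever $a<b$ and $(a,b+1)\in E(H)$; by construction, a directed path $v_1 \to v_2 \to \cdots \to v_{\ell}$ in $D$ is exactly a special vertex sequence with $\ell$ vertices, all lying in $[n]\setminus U'$. Invoking Lemma~\ref{lem:partition} on $D$ produces a path cover of size at most $\alpha(D)$, and hence some directed path on at least $|V(D)|/\alpha(D) \geq (n-|U'|)/\alpha(D)$ vertices. It therefore suffices to show that $\alpha(D) \leq 2k$.

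To prove this bound, let $I=\{i_1<\cdots<i_m\}$ be any independent set in the underlying undirected graph of $D$. The definition of $D$ unfolds to the clean condition that $(i_a, i_b+1) \notin E(H)$ for all $a<b$; equivalently, each shifted vertex $i_b+1$ is non-adjacent in $H$ to the entire initial segment $\{i_1,\ldots,i_{b-1}\}$. I would try to convert this ``forward bipartite'' non-edge structure into a genuine independent set of $H$ of size at least $\lceil m/2\rceil$, which together with $\alpha(H) \leq k$ will force $m \leq 2k$ and finish the proof.

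The natural extraction considers, for a threshold $t \in \{0,1,\ldots,m\}$, the hybrid set $S_t := \{i_1,\ldots,i_t\}\cup\{i_{t+1}+1,\ldots,i_m+1\}$. Every cross pair between the two halves of $S_t$ is a non-edge of $H$, so $\alpha(H[S_t]) = \alpha(H[\{i_1,\ldots,i_t\}]) + \alpha(H[\{i_{t+1}+1,\ldots,i_m+1\}])$. The plan would be to take $t \approx m/2$ and use the iterative fact that each element $i_j+1$ placed into the growing independent set frees up the whole initial segment $\{i_1,\ldots,i_{j-1}\}$ against which it is non-adjacent, allowing the construction to be continued by zig-zagging between ``left'' choices $i_a$ and ``right'' choices $i_b+1$.

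The hard part will be precisely this extraction step: a priori one half of $S_t$ could be dense in $H$ and contribute only a tiny independent subset, so the naive sum of independence numbers of the two halves need not reach $\lceil m/2\rceil$ at any single threshold $t$. Working around this is exactly the zigzag idea---iteratively peeling vertices $i_j+1$ from the right, each of which unlocks a longer initial segment of $I$, and interleaving the choices between $i_j$ and $i_j+1$ so that every pair in the resulting set is known to be a non-edge in $H$. Turning this iteration into a clean inductive argument that produces $\lceil m/2\rceil$ independent vertices in $H$ regardless of the configuration of edges within $\{i_1,\ldots,i_t\}$ and within $\{i_{t+1}+1,\ldots,i_m+1\}$ is the technical heart of the lemma.
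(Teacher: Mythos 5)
Your reformulation is clean: directed paths in the auxiliary digraph $D$ on $[n]\setminus U'$, with an arc $a\to b$ whenever $a<b$ and $\{a,b+1\}\in E(H)$, are exactly special vertex sequences avoiding $U'$. But the claim $\alpha(D)\le 2k$ on which your Gallai--Milgram step rests is false, so the extraction step you flag as the ``technical heart'' cannot be made to work: an independent set of size $m$ in $D$ need not yield an independent set of size close to $m/2$ in $H$. Take $U'=\emptyset$, $n=2m$, and let $H$ on $[2m]$ contain every odd--odd pair, every even--even pair, every consecutive pair $\{i,i+1\}$, and every pair $\{w,u\}$ with $w$ even, $u$ odd and $w<u$; the only non-edges of $H$ are then the pairs $\{u,w\}$ with $u$ odd, $w$ even, $w\ge u+3$. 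Since the odds and the evens each form a clique, $\alpha(H)=2$, so $k=2$. However, $\{1,3,\dots,2m-1\}$ is independent in $D$: an arc $(2a-1)\to(2b-1)$ with $a<b$ would require $\{2a-1,2b\}\in E(H)$, yet $2b\ge(2a-1)+3$ makes this a non-edge. Thus $\alpha(D)\ge m$ while $2k=4$, and Gallai--Milgram on $D$ only promises a path on $(n-|U'|)/\alpha(D)\le 2$ vertices. (The lemma is still true for this $H$, witnessed by $2\to4\to\cdots\to 2m-2$, but your argument cannot detect it.)

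The paper sidesteps this by never bounding $\alpha(D)$. It first deletes the consecutive edges $\{i,i+1\}$ from $H$ to obtain $H'$, where $\alpha(H')\le 2k$ because the deleted edges form a bipartite graph, orients $H'$ from smaller to larger, and applies Gallai--Milgram \emph{there} to obtain a union $F$ of at most $2k$ directed paths in which every vertex has in- and out-degree at most one. It then takes a minimum decomposition $\mathcal{M}$ of the edges of $F$ into special edge sets and uses the degree-one structure of $F$ to show the corresponding special vertex sequences are vertex-disjoint, each ending at an out-degree-$0$ vertex of $F$; together with the leftover isolated vertices this gives at most $2k$ pieces covering $[n]$, and pigeonhole finishes. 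In effect the paper does produce a small path cover of your $D$, but the count of $2k$ comes from counting out-degree-$0$ vertices of $F$, not from $\alpha(D)$, which is exactly the quantity your argument needs and which the example above shows can be arbitrarily larger than $2k$.
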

\begin{proof}
Let $H'$ be the graph obtained from $H$ by removing all edges of the form $\{i,i+1\}$. Since the removed edges form a graph with chromatic number at most 2, we must have that $\alpha (H') \leq 2 \alpha(G) \leq 2k$. Let us also direct the edges of $H'$ so that an edge $ij$ is oriented from $i$ to $j$ if $i < j$. Applying Lemma~\ref{lem:partition} to this directed graph, we obtain a partition of $[n]$ into at most $2k$ directed paths. Denote the digraph formed by the union of those paths as $F$. An important property of $F$ that we are going to use is that all outdegrees and indegreees of vertices in $F$ are at most one. 

Having obtained this path cover, we want to find another decomposition but now of the edges of $F$, and into a small number of special edge sets. Simply take $\mathcal{M}$ to be a smallest collection of edge disjoint special edge sets which decompose the edges of $F$ (this exists as one such collection is the set of all single edges in $F$). If $(v_1,v_2,\ldots,v_\ell)$ is the special sequence corresponding to a special edge set $M$ in $\mathcal{M}$, then note that $v_\ell$ must be a vertex of out-degree $0$ in $F$. Indeed, if this is not the case, then there exists $u > v_\ell$ such that $v_\ell \rightarrow u$ is an edge of $F$. Let $M' \in \mathcal{M}$ be the special edge set which contains it and note that then $v_\ell$ is the first vertex of this special edge set, since otherwise it would contain the edge $\{v_{\ell -1}, v_{\ell}+1\} \in M$ and contradict the edge-disjointness of the special edge sets in $\mathcal{M}$. But now we can 'concatenate' $M$ and $M'$ to form a larger special edge set $M \cup M'$, which contradicts the minimality of $\mathcal{M}$. 

Notice that the special edge sets in $\mathcal{M}$ have disjoint corresponding special vertex sequences. Indeed, since these special edge sets are non-empty and edge-disjoint, the only possibility for a common vertex in two different special vertex sequences would be if some vertex $v$ is the first vertex of one sequence and the last vertex of another one. In turn, as shown in the previous paragraph, this would contradict the maximality of $\mathcal{M}$. Now, a consequence of having disjoint corresponding special vertex sequences in $\mathcal{M}$ and the previous paragraph is that each one of these has a unique vertex of out-degree $0$. Moreover, let $S \subseteq V(H)$ denote the set of vertices which do not belong to any special vertex sequence of $\mathcal{M}$ and notice that all $v \in S$ also have out-degree $0$ in $F$. In turn, since $F$ is a decomposition of $[n]$ with at most $2k$ paths, there are at most $2k$ vertices of outdegree zero and thus $|\mathcal{M}| + |S| \leq 2k$. Hence, there exists one special vertex sequence in $\mathcal{M} \cup S$ (allowing a single vertex to be a special vertex sequence) with at least $\frac{n - |U'|}{|\mathcal{M}|+|S|} \geq \frac{n-|U'|}{2k}$ vertices not in $U'$, which finishes the proof.
\end{proof}

\noindent We can now prove our proposition.
\begin{proof}[ of Proposition \ref{lem:jumpwithzigzag}]
Let $(1,2,\ldots,|P|)$ be an ordering of the vertices of $P$, with the endpoints $x=1$ and $y=|P|$. 
Let $U_c$ be the set of vertices $v$ in $P$ such that there is a $u\in U$ with $|v-u|\leq 2c-1$. By applying Lemma~\ref{lem:zigzag} to $G[V(P)]$ and the set $U_c$, we obtain a special vertex sequence $(v_1,\ldots,v_\ell)$ corresponding to a special edge set $M$, with at least $\frac{|P|-|U_c|}{2k}$ vertices outside of $U_c$. First, notice that for each $v_{i+1}\notin U_c$ we may assume that $v_{i+1}-v_i\geq 2c$. Indeed, if that was not the case, then we obtain the desired path $P'$ from $P$ by replacing the interval $[v_i,v_{i+1}+1]$ by the edge $(v_i,v_{i+1}+1)$, noting that there are no vertices from $U$ inside of the removed interval, since otherwise $v_{i+1}$ would be in $U_c$, a contradiction.

Now we define, for each special vertex $v = v_i$ which is not in $U_c\cup \{v_1\}$, the $c$-element set $S_v:=\{v-1, v-3,\ldots, v-(2c-1)\}$ contained in the $(2c-1)$-element interval $I_v:=[v-(2c-1),v-1]$, which is disjoint to $U$. Since $v_{i}-v_{i-1}\geq 2c$, all of these sets are disjoint and further, disjoint to $U$. Now, the union $S$ of those sets is of size at least $\left(\frac{|P|-|U_c|}{2k}-1 \right)c$. Therefore, since $|U_c|\leq (4c-1)|U|$, we get that $|S|\geq k+1$ by our assumption on $c$. Hence, there exists an edge $e$ in $G$ spanned by $S$, since the independence number of $G$ is $k$. We may assume that this edge does not lie inside some $S_v$, as otherwise we can again get the desired path $P'$ by using $e$ instead of the interval which it bridges, avoiding at least one, but at most $2c-1$ vertices which are not in $U$. Hence, $e=ab$ with $a<b$ is between two distinct sets $S_{v_i}$ and $S_{v_j}$. Now we can find the required path $P'$ as shown in Figure~\ref{fig:n-1}, avoiding at most $4c-3$ vertices.

\noindent More precisely, we obtain the required path $P'$ as the union of the following paths:
\begin{itemize}
    \item The part of the path $P$ which connects $x$ to $a$, plus the edge $(a,b)$.
    \item The increasing path $P_2$ obtained by the following iterative procedure. First, initialize $P_2$ to be the edge $(v_i,v_i+1)$. Repeat the following. Let $r$ be the last vertex of $P_2$. If $r$ is a special vertex $r=v_t$, then add the edge $(r,v_{t+1}+1)$ to $P_2$, and update $r=v_{t+1}+1$. If $r$ is not a special vertex, add the edge $(r,r+1)$ to $P_2$, and update $r=r+1$. We stop when either $r=b$ or $r=v_j+1$.
    
    \item The increasing path $P_3$ obtained by the following iterative procedure. First we initialize $P_3$ to be the edge $(v_i,v_{i+1}+1)$. Repeat the following (exactly as for the previous path). Let $r$ be the last vertex of $P_3$. If $r$ is a special vertex $r=v_t$, then add the edge $(r,v_{t+1}+1)$ to $P_3$, and update $r=v_{t+1}+1$. If $r$ is not a special vertex, add the edge $(r,r+1)$ to $P_3$, and update $r=r+1$. We stop when either $r=b$ or $r=v_j+1$.
    \item The part of the path $P$ which connects $v_j+1$ to $y$.
\end{itemize}
It is easy to see that this path contains only vertices of $P$ and that it does not contain the vertex $v_j$. Furthermore, the only other vertices from $P$ which the new path $P'$ avoids are the vertices in $I_{v_i}$ which are strictly larger than $a$, and the vertices in $I_{v_j}$ which are strictly larger than $b$. So in total, the new path $P'$ avoids at most $1+(|I_{v_i}|-1)+(|I_{v_j}|-1)=4c-3$ vertices of $P$, which completes the proof.
\end{proof}

\begin{figure}[ht]
    \centering
    \begin{tikzpicture}[scale=1,main node/.style={circle,draw,color=black,fill=black,inner sep=0pt,minimum width=7pt}]
    
    	\tikzset{rectangle/.append style={draw=brown, ultra thick, fill=red!30}} 
        
      \node[rectangle, color=black, scale = 0.3] (s1) at (0,0) {$x$}; 
      \node[rectangle, color=black, scale = 0.3] (s2) at (17,0) {$y$};
    
    \node[rectangle, scale=0.6, color=red, opacity=1] (a1) at (1,0){};
    \node[rectangle, scale=0.6, color=red, opacity=1] (a2) at (4,0){};
    \node[rectangle, scale=0.6, color=red, opacity=1] (a3) at (6,0){};
    \node[rectangle, scale=0.6, color=red, opacity=1] (a4) at (9,0){};
    \node[rectangle, scale=0.6, color=red, opacity=1] (a5) at (11,0){};
    \node[rectangle, scale=0.6, color=red, opacity=1] (a6) at (13,0){};
    \node[rectangle, scale=0.6, color=red, opacity=1] (a7) at (16,0){};
    
    \node[main node, scale=0.6, color=black, opacity=1] (b0) at (1.3,0){};
    \node[main node, scale=0.6, color=black, opacity=1] (b1) at (4.3,0){};
    \node[main node, scale=0.6, color=black, opacity=1] (b2) at (6.3,0){};
    \node[main node, scale=0.6, color=black, opacity=1] (b3) at (9.3,0){};
    \node[main node, scale=0.6, color=black, opacity=1] (b4) at (11.3,0){};
    \node[main node, scale=0.6, color=black, opacity=1] (b5) at (13.3,0){};
    \node[main node, scale=0.6, color=black, opacity=1] (b6) at (16.3,0){};
    
     \node[main node, scale=0.6, fill=white, opacity=1] (p1) at (3.7,0){};
      \node[main node, scale=0.6, fill=white, opacity=1] (p11) at (3.4,0){};
    \node[main node, scale=0.6, fill=white, opacity=1] (p111) at (3.1,0){};
    \node[main node, scale=0.6, fill=white, opacity=1] (p3) at (8.7,0){};
    \node[main node, scale=0.6, fill=white, opacity=1] (p4) at (8.4,0){};
    \node[main node, scale=0.6, fill=white, opacity=1] (p5) at (8.1,0){};
    \node[main node, scale=0.6, fill=white, opacity=1] (p6) at (12.7,0){};
    \node[main node, scale=0.6, fill=white, opacity=1] (p6) at (12.4,0){};
    \node[main node, scale=0.6, fill=white, opacity=1] (p6) at (12.1,0){};
    \node[main node, scale=0.6, fill=white, opacity=1] (p7) at (15.7,0){};
    \node[main node, scale=0.6, fill=white, opacity=1] (p5) at (15.4,0){};
    \node[main node, scale=0.6, fill=white, opacity=1] (p9) at (15.1,0){};

     \draw[line width= 1 pt] (a1) to [bend left=50](b1);
  \foreach \i in {2,...,6}
     {
     \draw[line width= 2 pt,  color = blue] (a\i) to [bend left=50](b\i);
     }
       \draw[line width= 2 pt, color = blue] (p11) to [bend right=28](p9); 
       
       \draw[line width= 2 pt,  color = blue] (p11) to (s1);
       \draw[line width= 2 pt,  color = blue] (a2) to (a3);
       \draw[line width= 2 pt,  color = blue] (b2) to (a4);
       \draw[line width= 2 pt,  color = blue] (b3) to (a5);
       \draw[line width= 2 pt,  color = blue] (b4) to (a6);
       \draw[line width= 2 pt,  color = blue] (b5) to (p9);
       \draw[line width= 2 pt,  color = blue] (b6) to (s2);
       
    \node[rectangle, scale=0.6, color=red, opacity=1] (a1) at (1,0){};
    \node[rectangle, scale=0.6, color=red, opacity=1] (a2) at (4,0){};
    \node[rectangle, scale=0.6, color=red, opacity=1] (a3) at (6,0){};
    \node[rectangle, scale=0.6, color=red, opacity=1] (a4) at (9,0){};
    \node[rectangle, scale=0.6, color=red, opacity=1] (a5) at (11,0){};
    \node[rectangle, scale=0.6, color=red, opacity=1] (a6) at (13,0){};
    \node[rectangle, scale=0.6, color=red, opacity=1] (a7) at (16,0){};
    
    \node[main node, scale=0.6, color=black, opacity=1] (b0) at (1.3,0){};
    \node[main node, scale=0.6, color=black, opacity=1] (b1) at (4.3,0){};
    \node[main node, scale=0.6, color=black, opacity=1] (b2) at (6.3,0){};
    \node[main node, scale=0.6, color=black, opacity=1] (b3) at (9.3,0){};
    \node[main node, scale=0.6, color=black, opacity=1] (b4) at (11.3,0){};
    \node[main node, scale=0.6, color=black, opacity=1] (b5) at (13.3,0){};
    \node[main node, scale=0.6, color=black, opacity=1] (b6) at (16.3,0){};
    
     \node[main node, scale=0.6, fill=white, opacity=1] (p1) at (3.7,0){};
      \node[main node, scale=0.6, fill=white, opacity=1] (p11) at (3.4,0){};
    \node[main node, scale=0.6, fill=white, opacity=1] (p111) at (3.1,0){};
    \node[main node, scale=0.6, fill=white, opacity=1] (p3) at (8.7,0){};
    \node[main node, scale=0.6, fill=white, opacity=1] (p4) at (8.4,0){};
    \node[main node, scale=0.6, fill=white, opacity=1] (p5) at (8.1,0){};
    \node[main node, scale=0.6, fill=white, opacity=1] (p6) at (12.7,0){};
    \node[main node, scale=0.6, fill=white, opacity=1] (p6) at (12.4,0){};
    \node[main node, scale=0.6, fill=white, opacity=1] (p6) at (12.1,0){};
    \node[main node, scale=0.6, fill=white, opacity=1] (p7) at (15.7,0){};
    \node[main node, scale=0.6, fill=white, opacity=1] (p5) at (15.4,0){};
    \node[main node, scale=0.6, fill=white, opacity=1] (p9) at (15.1,0){};
       
    \node[scale=1] (a) at  (0, -0.3){$x$};
    \node[scale=1] (a) at  (17,-0.3){$y$};
    
    \node[scale=1] (a) at  (3.2,0.4){$S_{v_i}$};
    \node[scale=1] (a) at  (15.4,0.3){$S_{v_j}$};
    
     \node[scale=1] (a) at  (3.4,-0.2){$a$};
    \node[scale=1] (a) at  (15.1,-0.2){$b$};

    \end{tikzpicture} 
    \caption{
    The shorter path $P'$ is drawn in blue, the special vertices are represented with red squares, while the vertices after them on $P$ are represented with black dots. The vertices in the sets $S_v$ are represented with white dots. Also note the following about the black dots and red squares which are between $a$ and $b$ in $P$. It could be that one of these black dots and the red square which is after it in our drawing, are the same vertex. However, the construction of the path $P'$ remains the same (for example, the fourth black dot and the fifth red square could be the same vertex). Notice also in the drawing that some of the red squares do not have a corresponding set of white dots. These are precisely those special vertices which are in $U_c \cup \{v_1\}$.
    }
    
    \label{fig:n-1}
\end{figure}

\section{Proof of Theorem \ref{thm:main}}\label{sec:proof}
Let $\varepsilon>0$ be a small enough constant, and let $k$ be sufficiently large in terms of $\eps$. Let $G$ be a Hamiltonian graph with $\alpha(G)\leq k$ on $n \geq (2+\varepsilon)k^2$ vertices. Our goal is to prove that $G$ is pancyclic. It will be convenient for us to consider different ranges of cycle lengths, and for each range we have a separate subsection which deals with it.
\subsection{Lower range: from $3$ to $(2+\varepsilon)k$}
Showing that $G$ contains all cycles of lengths between $3$ and $(2+\varepsilon)k$ only requires the fact that $G$ has no independent set of size $k+1$. Indeed, this boils down to the study of cycle-complete Ramsey numbers. Namely, the cycle-complete Ramsey number $r(C_\ell,K_s)$ is the smallest number $N$ such that every graph on $N$ vertices either contains a copy of $C_\ell$ or an independent set of size $s$. 
The following result of Erd\H{o}s, Faudree, Rousseau and Schelp \cite{erdos1978cycle}, along with a more recent result by Keevash, Long and Skokan \cite{keevash2021cycle} cover the mentioned range of cycle lengths we need.

\begin{thm}[\cite{erdos1978cycle}]\label{thm:erdos cycle-complete}
Let $\ell\geq 3$ and $s\geq 2$. Then $r(C_\ell, K_s) \leq\left((\ell-2)(s^{1/x}+2)+1\right)(s-1)$, where $x=\lfloor \frac{\ell-1}{2}\rfloor$.
\end{thm}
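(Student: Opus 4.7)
The plan is to prove the bound by induction on $s$, with the cycle length $\ell$ treated as a fixed parameter. For the base case $s=2$, an $N$-vertex graph with $\alpha(G) \leq 1$ must be complete, so it contains $C_\ell$ as soon as $N \geq \ell$; a direct check shows $\left((\ell-2)(2^{1/x}+2)+1\right)\cdot 1 \geq \ell$ for all $\ell \geq 3$, so the bound holds trivially.

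For the inductive step, set $N := \left((\ell-2)(s^{1/x}+2)+1\right)(s-1)$ and suppose $G$ is an $N$-vertex graph with $\alpha(G) \leq s-1$ containing no copy of $C_\ell$. The first step is to derive a minimum degree condition. For any vertex $v \in V(G)$, the non-neighborhood $V(G)\setminus N[v]$ has independence number at most $s-2$, since appending $v$ to any independent set in the non-neighborhood produces an independent set in $G$. By the induction hypothesis applied to this non-neighborhood, it has fewer than $r(C_\ell, K_{s-1}) \leq \left((\ell-2)(s^{1/x}+2)+1\right)(s-2)$ vertices, for otherwise it would contain a $C_\ell$. Rearranging, every vertex of $G$ has degree at least $N - r(C_\ell,K_{s-1}) - 1 \geq (\ell-2)(s^{1/x}+2)$, up to lower order terms; this is the minimum-degree lower bound driving the remainder of the argument.

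Next, I would find a $C_\ell$ via a BFS/Moore-bound expansion starting from an arbitrary vertex $u$. Writing $\ell = 2x+1$ (the $\ell = 2x+2$ case is analogous), the levels $L_0, L_1, \ldots, L_x$ of the BFS tree must expand geometrically: roughly, if two consecutive levels were both too small, they would not absorb all neighbors of $L_i$, contradicting the minimum degree. A standard computation shows that if no two vertices at depth $x$ share a common short path back to $u$ of total length $\ell$, then $|L_i|$ grows by at least a factor $s^{1/x}$ per step, and in particular $|L_x| > s^{1/x} \cdot |L_{x-1}|$. Either some such pair of short paths closes up to produce exactly a $C_\ell$, or the BFS structure together with an edge inside some level $L_i$ (which exists because the graph is dense) can be combined to produce a cycle of length exactly $\ell$; here one exploits the fact that adding one edge inside a level or between adjacent levels shifts the parity and length of the tree-path cycles obtained.

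The main obstacle is the precise calibration of the constants, specifically isolating the factor $s^{1/x}+2$: one must show that the expansion rate of BFS really can be pushed to $s^{1/x}$ under the induced minimum degree, and that whenever expansion fails one can close up a cycle of length \emph{exactly} $\ell$ rather than merely some cycle of length $\leq \ell$. The cleanest way to handle this last subtlety is via a Bondy–Simonovits style path-rerouting argument inside the BFS tree, augmented by carefully chosen chords between levels $L_i$ and $L_{i+1}$; the arithmetic to get the exact stated bound is where the bulk of the calculation lies, and I expect the delicate accounting between the even and odd cycle cases (corresponding to $\ell = 2x+1$ versus $\ell = 2x+2$) to be the most technical part of the argument.
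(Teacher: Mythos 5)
This theorem is not proved in the paper at all: it is imported as a black box from Erd\H{o}s, Faudree, Rousseau and Schelp (citation \cite{erdos1978cycle}) and used directly to cover the small cycle lengths in the proof of Theorem~\ref{thm:main}. There is therefore no in-paper argument for you to match; any proof would have to be compared against the 1978 source rather than this paper.

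On its own terms, your sketch gets the first step right but has a real gap in the core of the argument. The induction on $s$ and the resulting minimum-degree bound $\delta(G)\gtrsim(\ell-2)(s^{1/x}+2)$ via the non-neighborhood $V(G)\setminus N[v]$ is indeed the standard opening (and is how EFRS begin). But the BFS/Moore-bound part is left unjustified at precisely the place where all the work lives. First, the claim that the levels must expand at rate $s^{1/x}$ does not follow from the degree bound alone: in an arbitrary $C_\ell$-free graph a huge fraction of each vertex's neighbors may sit in the same or an earlier level, and bounding these ``back'' and ``within-level'' edges is itself the main technical content. Second, and more seriously, the mechanism you propose for producing a cycle of length \emph{exactly} $\ell$ is not a working argument as stated. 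An edge between two vertices at BFS depth $\le x$ only yields a cycle of length \emph{at most} $2x+1$ after passing to the shared ancestor, and adjusting to the exact length $\ell$ requires constructing vertex-disjoint root-to-leaf paths of prescribed lengths $x$ and $x$ (odd $\ell$) or $x$ and $x+1$ (even $\ell$), which your BFS tree does not automatically provide. Invoking Bondy--Simonovits rerouting does not fix this either: that technique controls even cycles in bipartite-like settings and does not give you a cycle of one specific length $\ell$, nor does it handle the odd case. In short, the outline has the right flavor (a rooted tree of disjoint paths whose leaves are either independent or close a $C_\ell$), but the lemma that actually produces the disjoint paths and the careful even/odd bookkeeping — i.e., the whole reason the theorem is nontrivial and carries the peculiar constant $(\ell-2)(s^{1/x}+2)+1$ — is missing.
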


The next result by Keevash, Long and Skokan gives the precise behaviour of cycle-complete Ramsey numbers in a wide range of parameters, and proves a conjecture from \cite{erdos1978cycle}.

\begin{thm}[\cite{keevash2021cycle}]\label{precise cycle-complete}
There exists $C \geq 1$ so that $r(C_\ell , K_s) = (\ell - 1)(s- 1) + 1$ for $s \geq 3$ and $\ell\geq C \frac{\log s}{\log\log s}$.
\end{thm}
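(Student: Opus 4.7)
The lower bound $r(C_\ell, K_s) \geq (\ell-1)(s-1)+1$ is witnessed by the disjoint union of $s-1$ copies of $K_{\ell-1}$, which has independence number exactly $s-1$ and contains no $C_\ell$ (every cycle lies inside a single clique and those cliques are too small). For the upper bound, I consider a graph $G$ on $N = (\ell-1)(s-1)+1$ vertices with $\alpha(G) \leq s-1$, and the plan is to show $G$ contains $C_\ell$ via a stability dichotomy together with careful path modification.

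The first step is to produce a long path. Using the Gallai-Milgram type argument of \Cref{lem:partition} on a suitable orientation of $G$, together with the density hypothesis $N > (\ell-1)\alpha(G)$, one locates a path of length at least $\lceil N/(s-1) \rceil \geq \ell$. Using some extra edges (of which there must be at least one into the path from the remaining vertex), this yields a cycle of length at least $\ell$; the remaining task is to adjust the length down to exactly $\ell$.

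Now I would split into two cases based on how close $G$ is to the extremal construction. In the \emph{non-extremal} regime, where $G$ is structurally far from a disjoint union of $s-1$ cliques of size $\ell-1$, there must be many edges crossing any long cycle. Here Pósa-type rotation arguments combined with path-shortening tools in the spirit of \Cref{lem:easyjump} and \Cref{lem:jumpwithzigzag} should realize every cycle length in the interval $[3,\lceil N/(s-1) \rceil]$, in particular $\ell$. In the \emph{near-extremal} regime, $G$ is close to a blow-up of the extremal construction, and one must exploit the few cross-edges that must exist between the near-cliques (otherwise the extra vertex forces a contradiction with $\alpha(G) \leq s-1$). Applying Ramsey's theorem (\Cref{lem:ramsey}) inside one of the near-cliques provides a large complete subgraph, which gives a rich set of consecutive path lengths to fine-tune, and the cross-edges let one route such paths between cliques to assemble a cycle of any prescribed length in the required range.

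The main obstacle is the stability step: making quantitatively precise the claim that every graph on $N$ vertices with $\alpha(G) \leq s-1$ is either close to the extremal construction or carries many flexible paths of distinct lengths, with error terms small enough to accommodate the sharp count $(\ell-1)(s-1)+1$. The threshold $\ell \geq C \log s / \log \log s$ enters at the Ramsey step above, since extracting a clique of size approximately $\ell$ from a near-extremal piece of order $\Theta(s)$ forces $\ell$ to be at least of logarithmic order; for $\ell$ much smaller than this, $r(C_\ell, K_s)$ genuinely exceeds $(\ell-1)(s-1)+1$ and the Bondy-Erdős pattern breaks down, so one cannot hope to push the method below this bound by purely Ramsey-theoretic means.
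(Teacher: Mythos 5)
This statement is not proven in the paper at all --- it is cited as a black box from Keevash, Long and Skokan \cite{keevash2021cycle}, where the full proof occupies a substantial and technical paper. So there is no ``paper's own proof'' to compare against; the right question is whether your sketch would constitute a correct proof, and it does not.

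Your lower-bound construction is correct: $s-1$ disjoint copies of $K_{\ell-1}$ has independence number $s-1$ and no $C_\ell$. Your upper-bound outline, however, leaves the entire content of the theorem unproven. The Gallai--Milgram step is fine in spirit (though note a vertex/length slip: $\lceil N/(s-1)\rceil = \ell$ is the number of \emph{vertices} on the long path, so its \emph{length} is at least $\ell-1$, not $\ell$), and adding a back-edge does give a long cycle. But the step from ``some long cycle'' to ``a cycle of length exactly $\ell$'' is the whole theorem, and you dispose of it by positing an unproven stability dichotomy. You yourself flag this as ``the main obstacle.'' Nothing in your sketch explains why, in the non-extremal regime, rotation and the path-shortening tools of \Cref{lem:easyjump} or \Cref{lem:jumpwithzigzag} would hit \emph{every} length in $[3,\lceil N/(s-1)\rceil]$ at the sharp threshold $N=(\ell-1)(s-1)+1$ --- these lemmas shorten by bounded amounts and need slack, and precisely at the extremal count there is essentially none. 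Nor does the near-extremal case work as described: a disjoint union of $s-1$ cliques of size $\ell-1$ has only one extra vertex when $N=(\ell-1)(s-1)+1$, so the argument has to extract a contradiction or a $C_\ell$ from that single extra vertex plus the rigidity of the structure, which requires a genuine stability theorem with quantitatively tight error control, not a gesture at Ramsey's theorem inside a near-clique. Your heuristic explanation of where the threshold $\ell \geq C\log s/\log\log s$ comes from is reasonable as intuition (for small $\ell$, e.g.\ $\ell=3$, the Ramsey number is superlinear in $s$, so the formula must fail), but it does not substitute for the actual argument. In short, the statement is used here as an external result, and your proposal is a plausible high-level plan rather than a proof.
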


Now, note that since $G$ contains no independent set of size $k+1$, Theorem~\ref{thm:erdos cycle-complete} implies the existence of a cycle of length $\ell$ for every $\ell\in[3,\log k]$, while Theorem~\ref{precise cycle-complete} covers the range of $[\log k,(2+\varepsilon)k]$.

\subsection{Upper range: from $\frac{1000}{\varepsilon^2}k$ to $n$}\label{sec:upperrange}
First, note that all cycle lengths in $[2k^2+2k,n]$ can be obtained by iteratively applying Proposition~\ref{lem:jumpwithzigzag} with $c=1$, and $U=\emptyset$, and always shortening the cycle by one. Indeed, denote by $P$ any Hamilton path contained in a Hamilton cycle in $G$, and let $x$ and $y$ be its endpoints. As long as $n> 2k^2+2k$, by applying the mentioned proposition we get a path which is by $c=1$ shorter than $P$ and has the same endpoints, so adding the edge $xy$ to it creates a cycle of length $n-1$. We remove the remaining vertex and repeat. This gives all cycle lengths in $[2k^2+2k,n]$.

Now we turn to the cycle lengths in $\left[\frac{1000}{\varepsilon^2}k,2k^2+2k\right]$. For this we need the following lemma.

\begin{lem}\label{lem:partitionintomatchingcycle}
Let $G$ be a Hamiltonian graph on $n$ vertices with independence number $k$. Then, there is a partition of the vertices of $G$ into a cycle $C$ and a set $S$ of size $|S|= \frac{\varepsilon n}{20}$, such that there is a matching $M \subseteq E[C,S]$ which covers $S$.
\end{lem}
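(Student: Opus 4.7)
The plan is to construct $S$ and $C$ by iteratively applying the path-shortening Proposition~\ref{lem:jumpwithzigzag} to a Hamilton path of $G$.

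Let $H = v_1 v_2 \cdots v_n v_1$ be a Hamilton cycle of $G$, and let $P_0$ be the Hamilton path obtained by deleting one fixed edge $e_0$ from $H$. I will inductively produce, for each $t = 0, 1, \ldots, \varepsilon n/20$, a path $P_t$ on $n - t$ vertices with the same endpoints as $P_0$, an excised set $S_t := V(G) \setminus V(P_t)$ of size $t$, and a matching $M_t \colon S_t \to V(P_t)$ saturating $S_t$. To advance from $t$ to $t+1$, apply Proposition~\ref{lem:jumpwithzigzag} to $P_t$ with parameter $c = 1$ and "keep-set" $U := \mathrm{image}(M_t)$, which forces the already-matched vertices to remain in the new path. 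The proposition produces $P_{t+1}$ with exactly one newly excised vertex $v^\star_t \in V(P_t) \setminus U$; for $c=1$ the shortcut that replaces $v^\star_t$ uses a chord between its two $P_t$-neighbors, both of which remain in $V(P_{t+1})$. We then match $v^\star_t$ to one of its neighbors (Hamilton or chord) in $V(P_{t+1}) \setminus \mathrm{image}(M_t)$, extending the matching to $M_{t+1}$.

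After $t = \varepsilon n/20$ iterations, re-attaching $e_0$ closes $P_t$ into the desired cycle $C$ on $V(G) \setminus S_t$, and we set $S := S_t$, $M := M_t$. The hypothesis of Proposition~\ref{lem:jumpwithzigzag} with $c=1$ and $|U| = t$ simplifies to $|P_t| > 2k^2 + 2k + 3t$, i.e., $n - 4t > 2k^2 + 2k$. For $t \leq \varepsilon n/20$ this becomes $(1 - \varepsilon/5)n > 2k^2 + 2k$, which is guaranteed by $n \geq (2+\varepsilon)k^2$ for small $\varepsilon$ and $k$ sufficiently large in terms of $\varepsilon$.

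The main obstacle is the matching extension step: verifying that $v^\star_t$ always has some unmatched neighbor in $V(P_{t+1})$. Both Hamilton-cycle neighbors of $v^\star_t$ in $P_t$ might already belong to $\mathrm{image}(M_t)$, so to guarantee an available partner one exploits the density of $G$: since $\alpha(G)\leq k$ forces the average $G$-degree to be at least $n/k$, a careful count shows that typical vertices have at least $n/k \gg 2t$ neighbors, whence a chord neighbor in $V(P_{t+1}) \setminus \mathrm{image}(M_t)$ exists. The few "low-degree" vertices that might resist this can be placed into $U$ from the outset so that they are never excised; since $\alpha(G)\leq k$ also bounds the number of such exceptional vertices, this enlargement of $U$ is harmless for the hypothesis computation above.
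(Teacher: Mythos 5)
Your overall strategy — iteratively apply Proposition~\ref{lem:jumpwithzigzag} with $c=1$, using $U$ to protect already-matched vertices on the remaining path/cycle, and matching the newly excised vertex at each step — is the same as the paper's. The hypothesis verification $n-4t > 2k^2+2k$ is fine. However, the handling of the one step you identify as the ``main obstacle'' is where your argument genuinely breaks, and the fix you sketch does not work.

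The degree argument fails quantitatively. The condition $\alpha(G)\leq k$ gives average degree roughly $n/k \approx 2k$, but after $t \approx \varepsilon n/20 \approx \varepsilon k^2/10$ iterations you need a neighbor outside a forbidden set of size about $2t \approx \varepsilon k^2/5$. For $k$ large in terms of $\varepsilon$ we have $n/k \ll 2t$, so ``$n/k \gg 2t$'' is false; in fact it is the reverse inequality that holds. Moreover, the claim that $\alpha(G)\leq k$ bounds the number of ``low-degree'' exceptional vertices is also incorrect: a disjoint union of $k$ cliques of size $n/k$ (suitably joined into a Hamiltonian graph, as in the extremal construction in the introduction) has independence number $k$ yet \emph{every} vertex has degree only about $2k$, so you cannot absorb the low-degree vertices into $U$ — there may be $n$ of them, which would destroy the hypothesis of Proposition~\ref{lem:jumpwithzigzag}.

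The paper sidesteps this entirely by a small but crucial change: at each step it takes $U$ to be not just the set $U_M$ of already-matched endpoints, but $U_M$ together with the cycle-neighbors of $U_M$ on the current cycle. Then the proposition excises a vertex $v \notin U$, which by definition of $U$ means neither cycle-neighbor of $v$ lies in $U_M$; both of these neighbors survive into the new cycle (only $v$ was removed), so $v$ can always be matched to a cycle-neighbor that is still unmatched. This costs only a constant-factor blow-up $|U| \leq 3|U_M|$, which the hypothesis of the proposition comfortably tolerates, and it removes the need for any degree estimate. I'd encourage you to adopt this bookkeeping; with $U = \mathrm{image}(M_t)$ alone, the matching-extension step is not salvageable by the route you propose.
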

\begin{proof}
To show this claim, we apply Proposition~\ref{lem:jumpwithzigzag} iteratively $\varepsilon n/20$ times as follows. We always have $c=1$, and in the beginning we set $U_M=S=\emptyset$, and we set $M$ to be an empty matching and $C$ a Hamilton cycle in $G$.
During the procedure $C$ and $S$ are always disjoint and partition the vertices of $G$, $M$ is a matching in $E[C,S]$ which covers $S$, and $U_M$ is the set of endpoints of $M$ in $C$.

In the first step we apply Proposition~\ref{lem:jumpwithzigzag} with the mentioned values of $c=1$ and $U=\emptyset$, to get a cycle $C'$ of length $n-1$, and a vertex $v$ which is not on $C'$. Let $v'$ be a neighbor of $v$ in the cycle $C$. Now we set $C=C'$, $S = \{v\}$, $M=\{vv'\}$ and $U_M=\{v'\}$.

In the $i$-th step of the procedure, we let $U$ denote the set of vertices which are either in $U_M$, or adjacent to a vertex in $U_M$ on the current cycle $C$.
We apply Proposition~\ref{lem:jumpwithzigzag} with $c=1$ and $U$ to the graph $G[C]$, to get a cycle $C'$ of length $|C|-1$ and a vertex $v \in C \setminus C'$ which is not in $U$. Again, we denote by $v'$ the neighbor of $v$ in $C$, and we set $M:=M\cup \{vv'\}$, $S := S \cup \{v\}$, $U_M=U_M \cup\{v'\}$ and $C=C'$. Notice that since we only perform $\varepsilon n/20$ steps, at each point we have that $|U_M|\leq \varepsilon n/20$, and that $|C|\geq n-\varepsilon n/20$. Together with the fact that $|U|\leq 3|U_M|$, this gives that $\frac{|C|-3|U|}{2k}-1> k$, so we can always successfully apply Proposition~\ref{lem:jumpwithzigzag} with $c=1$. The resulting matching is then of size $\frac{\varepsilon n}{20}$, and evidently satisfies the given requirements.

\end{proof}

We are ready to show how to get the cycle lengths in $\left[\frac{1000}{\varepsilon^2}k,2k^2+2k\right]$. We first apply the above lemma to get a cycle $C$ of length $n-\frac{\varepsilon n}{20}\geq 2k^2+\frac{2\varepsilon k^2}{3}$ and outside of it a set $S$ of size $|S|=\frac{\varepsilon n}{20}$, together with a matching $M$ between them which covers $S$. Split the cycle $C$ into $4/\varepsilon$ intervals of (almost) equal size. By pigeonholing, at least one of those intervals contains at least $\frac{\varepsilon^2n}{80}$ endpoints of $M$. Let $S'$ be the subset of $S$ of vertices corresponding to those endpoints in $M$. We apply Lemma~\ref{lem:pathlengthsininterval} to the graph $G[S']$ with say $\gamma=1/100$, and conclude that there are two vertices $x'$ and $y'$ in $G[S']$, between which there exists a path of length $\ell$ in $G[S']$, for every $\ell\in [\frac{\varepsilon^2k}{100}, \frac{\varepsilon^2k}{50}]$.

Let $x$ and $y$ be the vertices in $C$ corresponding to $x'$ and $y'$ in $M$. Let $P$ be the longer path in $C$ which connects $x$ and $y$, so that by the choice of $S'$ we have $|P|\geq |C|-\frac{\varepsilon n}{4}>2k^2+2k$. 
Now we use Proposition~\ref{lem:jumpwithzigzag} to show that in $G[P]$ the pair $(x,y)$ is $\frac{\varepsilon^2 k}{100}$-dense in $\left[\frac{900k}{\varepsilon^2},2k^2+2k\right]$. Denote $P_0:=P$, and we obtain the path $P_i$ from path $P_{i-1}$ by applying Proposition~\ref{lem:jumpwithzigzag} with $c=\frac{\varepsilon^2k}{400}$ and $U=\emptyset$. We do this until $|P_i|<\frac{900k}{\varepsilon^2}$ and then we stop our procedure. Notice we could do each step of the procedure as we always had $c \left(\frac{|P_i|}{2k}-1 \right)>k$. Hence we obtain a sequence of $xy$-paths of decreasing lengths, where $|P_i|\geq|P_{i-1}|-4c+3\geq |P_{i-1}|-\frac{\varepsilon^2k}{100}$. Since $|P_0|>2k^2+2k$ and the last path is of length at most $\frac{900k}{\varepsilon^2}$, we indeed get that in $G[P]$ the pair $(x,y)$ is $\frac{\varepsilon^2 k}{100}$-dense in $\left[\frac{900k}{\varepsilon^2},2k^2+2k\right]$.

Now, applying \Cref{obs:combining} to the graph $G[P\cup S']$ with $v_1=x$ and $v_2=y$, gives all cycle lengths in $\left[\frac{1000k}{\varepsilon^2},2k^2+2k\right]$.

\subsection{Middle range : from $(2+\eps) k$ to $\frac{1000}{\varepsilon^2}k$}\label{sec:middlerange}

By Lemma \ref{lem:partitionintomatchingcycle}, there exists a partition of the vertices of $G$ into a cycle $C$ and a set $S$ such that $|S| = \eps n/20$, along with a matching $M \subseteq E[C,S]$ which covers $S$. Denote the vertices along the cycle $C$ with $(1,\ldots, N)$. For each vertex $x \in S$, we denote by $m(x)$ the vertex in $C$ matched to $x$ in $M$. We now remove from $S$ the at most $\frac{1000k}{\varepsilon^2}$ vertices $x$ which have $m(x)\in \{1,2,\ldots,\frac{1000k}{\varepsilon^2}\}$. Hence, there are at least $\varepsilon n/22$ vertices remaining in $S$.

We let $B_1:=S$ and apply Proposition \ref{lem:pathlengthsininterval} with $\gamma=1/100$ to the graph $G[B_1]$, to find
a pair of vertices $x_1,y_1$ such that for all $\ell \in \left[\frac{\varepsilon k}{100}, \frac{\varepsilon k}{50}\right]$, there is an $x_1y_1$ path of length $\ell$ in $G[S]$; we set $B_2:=B_1-\{x_1,y_1\}$.
We repeat this $t= \varepsilon k^2/40$ times, i.e., we apply Proposition~\ref{lem:pathlengthsininterval} to $B_i$ to obtain vertices $x_i,y_i$ such that for all $\ell \in \left[\frac{\varepsilon k}{100}, \frac{\varepsilon k}{50}\right]$, there is an $x_iy_i$-path of length $\ell$ in $G[B_i]\subseteq G[S]$, and we set $B_{i+1}:=B_i-\{x_i,y_i\}$.
Note that each $B_i$ is of size at least $|S|-2t\geq \frac{\varepsilon n}{22} - 2t \geq \frac{\varepsilon k^2}{11} - \frac{\varepsilon k^2}{20} \geq \frac{\varepsilon k^2}{30}$, so we always can successfully apply Proposition~\ref{lem:pathlengthsininterval}.

Before we make our first crucial observation, we introduce some notation. First, with possible relabeling, let us suppose that for each $i$, we have that $m(x_i) < m(y_i)$. Now, for each $i$, let $P_i$ denote the subpath $\left(m(x_i)-\frac{1000k}{\varepsilon^2}, m(x_i)-\frac{1000k}{\varepsilon^2}-1,\ldots, m(x_i) -1,m(x_i)\right)$ of length $\frac{1000k}{\varepsilon^2}$. Notice that, since $m(x_i)>\frac{1000k}{\varepsilon^2}$, none of those paths contains vertex $1$.
\begin{lem}
\label{induced}
If there is an $i$ such that $G[P_i]$ does not contain an increasing path of length $\varepsilon^3k$ as an induced subgraph with $m(x_i)$ as an endpoint \footnote{Recall that a path $P$ is an induced subgraph of $G[P_i]$ if its vertices belong to $V(P_i)$ and except for the edges of $P$, $G[P]$ does not contain any other edges.}, then $G$ contains all cycle lengths in $\left[(2+\varepsilon)k,\frac{1000k}{\varepsilon^2}\right]$.
\end{lem}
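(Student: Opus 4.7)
Let $v^* := m(x_i)$ and $L := \varepsilon^3 k$. The plan is to build, for each target length $\ell \in [(2+\varepsilon)k,\, 1000k/\varepsilon^2]$, a cycle through $x_i$ and $v^*$ of length $\ell$. Each such cycle will consist of three pieces: the matching edge $\{x_i, v^*\}$, a path inside $G[P_i]$ from $v^*$ to a common target vertex $u$ of tunable length, and a short return route from $u$ back to $x_i$.

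First, I would extract structural information from the hypothesis. Iteratively in what remains of $G[P_i]$, I find a maximal induced increasing path ending at $v^*$ and remove its interior vertices. Each such path has length strictly less than $L$ by hypothesis, so the process executes $\Omega(|P_i|/L) = \Omega(\varepsilon^{-5})$ rounds. The maximality at each extraction encodes chord incidences: every vertex in the remaining part of $P_i$ that is $G$-adjacent to the leftmost endpoint of the extracted path must also be adjacent to some other vertex of the path. Combined with $\alpha(G) \le k$, a pigeonhole argument then yields chords of many different index-differences incident to the vicinity of $v^*$.

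Second, these chords together with Proposition~\ref{lem:jumpwithzigzag} applied inside $G[P_i]$ at a scale $c$ of order $\varepsilon^2 k$ let me fix a target $u \in P_i$ of index roughly $m(x_i) - \Theta(k/\varepsilon^2)$ and realize $v^*$-to-$u$ paths in $G[P_i]$ of essentially every length in $[(2+\varepsilon)k - O(1),\, 1000k/\varepsilon^2 - O(1)]$, with density $O(1)$. The scale $c$ is tuned so that the shortening step of Proposition~\ref{lem:jumpwithzigzag} bridges the small lengths of the extracted induced paths with the full path length along $P_i$, filling in integer lengths that the bare chord structure leaves out.

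Third, for each target $\ell$, I close the appropriate $v^*$-to-$u$ path into a cycle by adjoining the matching edge $\{x_i, v^*\}$ together with a constant-length return from $u$ to $x_i$. The return is built either from a vertex $z \in S$ adjacent to both $u$ and $x_i$, whose existence is forced by $|S| = \Omega(\varepsilon k^2) \gg k$ together with $\alpha(G) \le k$, or, when this is not directly available, via a short routing through an auxiliary pair $(x_j, y_j)$ using Proposition~\ref{lem:pathlengthsininterval}. Observation~\ref{obs:combining} then assembles cycles of every length in $[(2+\varepsilon)k,\, 1000k/\varepsilon^2]$. The main obstacle is the second step: translating the absence of a single long induced increasing path ending at $v^*$ into the density of $v^*$-to-$u$ path lengths, the subtlety being the interplay between the strictly increasing ordering and the chord-based shortening provided by Proposition~\ref{lem:jumpwithzigzag}.
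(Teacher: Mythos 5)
Your proposal has the right starting point (use the induced-path hypothesis to make the endpoints of $P_i$ path-length-dense) but the two steps you use to turn this into cycles both break down. The central difficulty, which the hypothesis of the lemma is there to overcome, is that Proposition~\ref{lem:jumpwithzigzag} applied inside $G[P_i]$ at scale $c=\Theta(\eps^2 k)$ can only shorten a path while $c\left(\frac{|P|}{2k}-1\right)>k$, i.e.\ while $|P|\gtrsim k/\eps^2$; it simply cannot produce $v^*$-to-$u$ paths of length anywhere near $(2+\eps)k$, and each successful step drops the length by $\Theta(\eps^2 k)$, so the claimed ``density $O(1)$'' is also not what you get. The paper does not use Proposition~\ref{lem:jumpwithzigzag} here at all: since $G[P_i]$ has no induced increasing path of length $\eps^3 k$ ending at $m(x_i)$, the last $\eps^3 k$ vertices of any current shortened path ending at $m(x_i)$ must carry a chord, so one can keep shortening by strictly less than $\eps^3 k$ per step all the way down to length $\le \eps^3 k$. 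This direct argument is what makes the endpoints of $P_i$ be $\eps^3 k$-dense on the entire interval $[0,1000k/\eps^2]$; your ``maximal induced path extraction plus pigeonhole'' detour never establishes this, and your second step cannot substitute for it.

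The closing step is also not correct as stated. There is no reason a vertex $z\in S$ adjacent to both $u$ and $x_i$ must exist: $\alpha(G)\le k$ and $|S|\gg k$ do not force a common neighbour (the vertex $u$ could have no neighbours in $S$ at all without violating any assumption). Moreover, even if such a $z$ existed, a fixed length-$2$ return would make the set of attainable cycle lengths only as dense as the $v^*$-to-$u$ path lengths, i.e.\ still with gaps of size $\Theta(\eps^2 k)$ or $\eps^3 k$, not every integer. What you are missing is the other two components that the paper feeds into Observation~\ref{obs:combining}: the rest of the Hamilton cycle $C$ is shortened via Lemma~\ref{lem:easyjump} to a fixed path of length at most $2k$ between $m(y_i)$ and the far endpoint of $P_i$, and the pair $(x_i,y_i)$ supplies $x_iy_i$-paths inside $G[S]$ of every length in $[\eps k/100,\eps k/50]$, an interval that is wider than the $\eps^3 k$ gap and so fills in the missing residues. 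Without the tunable $S$-paths there is no gap-filling mechanism, and without the $\le 2k$ detour around $C$ there is no way to get the bottom of the range down to $(2+\eps)k$.
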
 
\begin{proof}
Assume for sake of contradiction that $G[P_i]$ does not contain such a path. Then, the following holds.
\begin{claim*}
In the graph $G[P_i]$, the endpoints of $P_i$ are $\varepsilon^3k$-dense in $\left[0,\frac{1000k}{\varepsilon^2}\right]$.
\end{claim*}

\begin{proof}[ of Claim]
Consider the following procedure. We begin with $P_i$; by assumption, there exists a chordal edge among the last $\eps^3 k$ vertices of $P_i$ ending with $m(x_i)$. Otherwise, these vertices would induce an increasing path in $G[P_i]$.
Thus we obtain a path $P_i'$ by adding this edge to $P_i$ instead of the interval between the endpoints of this edge. We repeat this procedure, each time finding a chordal edge in the newly obtained path, and we do this until our path has length at most $\varepsilon^3k$. Hence, we obtain a sequence of paths such that two consecutive paths lengths are at most $\varepsilon^3k$ apart, while the last path has length at most $\varepsilon^3k$. Since the endpoints always remain the same, this implies the statement of the claim.
\end{proof}

\noindent Consider now the path $P$ contained in the cycle $C$, and which is spanned by the vertices in the interval
$\left[m(y_i),N]\cup[1,m(x_i)-\frac{1000k}{\varepsilon^2}\right]$. By Lemma \ref{lem:easyjump}, there exists a path $P'$ with the following properties: $V(P') \subseteq V(P)$, the endpoints of $P'$ are the same as those of $P$, and $|P'| \leq 2k$. 

Then, in order to finish, recall that $x_i,y_i$ are such that for all $\ell \in \left[\frac{\varepsilon k}{100}, \frac{\varepsilon k}{50}\right]$, there exists an $x_iy_i$-path of length $\ell$ in $G[S]$. Further, by the above claim we got that in the graph $G[P_i]$, the endpoints of $P_i$ are $\varepsilon^3k$-dense in $\left[0,\frac{1000k}{\varepsilon^2}\right]$.
Hence we can use \Cref{obs:combining} on the graph $G$ with $v_1=m(x_i), v_2=m(x_i)-\frac{1000k}{\varepsilon^2}$ and $v_3=m(y_i)$, where $S_1$ are the internal vertices of $P_i$, $S_2$ the internal vertices of $P$ and $S_3=S$ to obtain all cycle lengths in $\left[(2+\eps)k,\frac{1000}{\varepsilon^2}k\right]$.
\end{proof}

We have $t=\varepsilon k^2/40$ paths $P_i$ of length $1000k/\varepsilon^2$ such that each of them corresponds to an interval of vertices in $C$ and therefore intersects at most $2000k/\varepsilon^2$ other such paths. Thus, we can choose a collection of $r=\frac{t}{2000k/\varepsilon^2+1}\geq \sqrt{k}$  of those paths which are all disjoint.  With possible renaming, w.l.o.g.\ we may assume that those paths are $P_1,\ldots,P_r$. 
Using Lemma \ref{induced}, we may also assume that there are induced increasing subpaths $Q_1, \dots,Q_r$ of $G[P_1],\ldots,G[P_r]$ with endpoints $m(x_1),\ldots,m(x_r)$ respectively and of length $\varepsilon^3 k$. 

Let us now define an auxiliary colored complete graph $H$ on $[r]$ in the following manner. For each $i \in [r]$, partition $Q_i$ into three consecutive subpaths $Q^{3}_i,Q^{2}_i,Q^{1}_i$ of size $|Q_i|/3=\eps^3 k/3$, with $Q^{1}_i$ containing $m(x_i)$. Now, for $i,j \in [r]$, we color the edge $ij$ in $H$ \emph{red} if in $G$ both $E[Q^{1}_i,Q^{1}_j]$ and $ E[Q^{3}_i,Q^{3}_j]$ are non-empty. We color it \emph{blue} if $E[Q^{1}_i,Q^{1}_j] = \emptyset$, and in the remaining case, we color it \emph{green}. 
\begin{claim*}
There are no blue or green cliques in $H$ of size larger than $6/\eps^3$.
\end{claim*}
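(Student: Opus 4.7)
The plan is to derive a contradiction with $\alpha(G)\leq k$ from any blue or green clique of size exceeding $6/\eps^3$, by exhibiting an overly large independent set. The key structural input I want to exploit is that each $Q_i$ is an \emph{induced} increasing subpath of $G[P_i]$, which by restriction makes each of $Q_i^1, Q_i^2, Q_i^3$ an induced path in $G$. In addition, the paths $P_1,\dots,P_r$ were chosen to be pairwise disjoint, so the families $\{Q_i^1\}_{i\in[r]}$ and $\{Q_i^3\}_{i\in[r]}$ each consist of pairwise disjoint vertex sets.

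The first step is to extract from each piece a sizable independent set. Since $Q_i^1$ is an induced path on $\eps^3k/3$ vertices, taking every other vertex yields an independent set $I_i^1\subseteq V(Q_i^1)$ of size at least $\eps^3 k/6$; analogously we get $I_i^3\subseteq V(Q_i^3)$ with $|I_i^3|\geq \eps^3 k/6$. The second step is to handle a putative blue clique: suppose $J\subseteq[r]$ is blue with $|J|>6/\eps^3$. By the definition of blue, $E[Q_i^1,Q_j^1]=\emptyset$ for all distinct $i,j\in J$, and combined with the fact that each $I_i^1$ is independent in $G$ (and the $Q_i^1$'s are disjoint), the union $\bigcup_{i\in J}I_i^1$ is an independent set in $G$ of size at least
\[
|J|\cdot\frac{\eps^3 k}{6}>\frac{6}{\eps^3}\cdot\frac{\eps^3 k}{6}=k,
\]
contradicting $\alpha(G)\leq k$.

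For a green clique, the third step is essentially identical but with $Q_i^3$ in place of $Q_i^1$: a green edge $ij$ satisfies $E[Q_i^1,Q_j^1]\neq\emptyset$ and $E[Q_i^3,Q_j^3]=\emptyset$, so for a green clique $J$ with $|J|>6/\eps^3$ the union $\bigcup_{i\in J}I_i^3$ is again an independent set of size larger than $k$, once more contradicting $\alpha(G)\leq k$.

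There is no real obstacle in this argument; the only care needed is in bookkeeping the two structural ingredients already guaranteed by earlier parts of the proof, namely the inducedness of the $Q_i$'s (which makes alternating-vertex independent sets available inside each third) and the disjointness of the $P_i$'s (which makes the union of these independent sets legitimately independent once the missing bipartite edges are ruled out by the color class).
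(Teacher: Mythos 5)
Your proof is correct and takes essentially the same approach as the paper: extract an independent set of size $\geq \eps^3 k/6$ from each induced third $Q_i^1$ (resp.\ $Q_i^3$) by taking alternating vertices, note that the color class rules out all bipartite edges between different thirds, and conclude that a blue (resp.\ green) clique of size exceeding $6/\eps^3$ would force an independent set of size $>k$ in $G$, a contradiction.
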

\begin{proof}
Suppose there exists a blue clique $\{i_1, \ldots, i_\ell\}$ in $H$. Since each $Q^{1}_{i_j}$ is an induced path, its 
odd vertices form an independent set of size $|Q^{1}_{i_j} |/2$. Moreover, by assumption, there are no edges between two $Q^{1}_{i_j}$'s and therefore the set $\bigcup_{1 \leq j \leq \ell} V(Q^{1}_{i_j})$ must contain an independent set of size at least $$\sum_{1 \leq j \leq \ell} \frac{|Q^{1}_{i_j}|}{2} \geq \ell \cdot \left(\eps^3 k/6 \right) .$$ Since $\alpha(G) \leq k$, this implies that $\ell \leq 6/\eps^3$. An analogous argument deals with green cliques.
\end{proof}
Given the above claim and Theorem \ref{lem:ramsey}, and since $H$ has $r>\sqrt{k}$ vertices where we chose $k$ large enough in terms of $\varepsilon$, we have that there exists a red clique in $H$ of size at least $4\varepsilon^{-7}$. 

Denote by $I$ the vertices/indices contained in this clique, so that for all $i,j\in I$ we have that there is an edge $e^1_{ij}$  between $Q^{1}_i$ and $Q^{1}_j$ and an edge $e^3_{ij}$  between $ Q^{3}_i$ and $Q^{3}_j$. For simplicity of notation, w.l.o.g.\ we may assume that the indices in $I$ are $\{1, 2, 3, \ldots, |I|\}$ according to the ordering of the vertices $m(x_i)$ for $i \in I$ - that is, we now have $m(x_1) < m(x_2) < \ldots < m(x_{|I|})$. Denote by $z$ the endpoint of $Q_1$ which is not $m(x_1)$. 
In order to complete the proof, we will need the following lemma.
\begin{lem}\label{lem:jump with Q}
 The path $Q$, defined by the interval $\left[z,m(x_{|I|})\right]$ is such that its endpoints are $3\varepsilon^3k$-dense in $\left[0,\frac{k}{\varepsilon^4}\right]$ in the graph $G[Q]$. 
\end{lem}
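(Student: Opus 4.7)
The plan is to construct a sequence of $zm(x_{|I|})$-paths $P_0, P_1, \ldots, P_T$ in $G[Q]$ whose lengths increase monotonically by at most $3\varepsilon^3 k$ per step, with $|P_0| < 3\varepsilon^3 k$ and $|P_T| \geq k/\varepsilon^4$. Such a sequence immediately certifies that the pair $(z, m(x_{|I|}))$ is $3\varepsilon^3 k$-dense in $[0, k/\varepsilon^4]$ in $G[Q]$.

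For the base, $P_0$ walks from $z$ along $Q^3_1$ to the endpoint of $e^3_{1,|I|}$, takes this shortcut into $Q^3_{|I|}$, and then walks across $Q_{|I|}$ to $m(x_{|I|})$; so $|P_0| \leq \varepsilon^3 k/3 + 1 + \varepsilon^3 k < 2\varepsilon^3 k$. To pass from $P_i$ to $P_{i+1}$, I would pick a shortcut $e^?_{a,b}$ currently used in $P_i$ together with an unused index $j \in I$ with $a < j < b$, and substitute $e^?_{a,b}$ by a detour through $Q_j$. Two local replacements are available: a \emph{same-type} replacement $e^?_{a,b} \to e^?_{a,j} \to (\text{walk in } Q^?_j) \to e^?_{j,b}$, whose length change lies in $[1, \varepsilon^3 k/3]$; and a \emph{cross-type} replacement $e^3_{a,b} \to e^3_{a,j} \to (\text{walk from } Q^3_j \text{ across to } Q^1_j) \to e^1_{j,b}$, after which the continuation inside $Q_b$ must traverse from $Q^1_b$ back through $Q^2_b$ to the entry of the next shortcut in $Q^3_b$, yielding a net change in $[\varepsilon^3 k/3 + 2, 2\varepsilon^3 k]$. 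Both increments lie strictly inside $(0, 3\varepsilon^3 k)$, and $P_{i+1}$ remains a simple path because the $Q_j$'s for $j \in I$ are pairwise vertex-disjoint, so the newly visited vertices in $Q_j$ (and the extended portion of $Q_b$) are always fresh.

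To make $|P_T|$ reach $k/\varepsilon^4$ within the index budget $|I| - 2 \geq 4\varepsilon^{-7} - 2$, I would use cross-type insertions at interior shortcuts as the workhorse (each contributing at least $\varepsilon^3 k/3$ of length per single new index), so that $T = O(1/\varepsilon^7)$ iterations suffice. A pigeonhole argument confirms that at every step some shortcut's gap still contains an unused intermediate index, since after $i$ steps there are $|I| - 2 - i$ unused indices distributed among $1 + i$ gaps and this count stays positive well beyond $i = T$. The main obstacle I anticipate is that a cross-type replacement applied at a shortcut which is followed by an $e^1$-type shortcut (in particular at the last shortcut, incident to $Q_{|I|}$) actually \emph{shortens} the walk in $Q_b$ and so may \emph{decrease} $|P_{i+1}|$; the remedy is to restrict cross-type insertions to shortcuts followed by an $e^3$-type shortcut and to use only the monotone same-type insertions at the boundary, initializing from a slightly enriched $P_0$ (via one initial same-type insertion into the base) so that an eligible interior shortcut is available from the first cross-type step onward.
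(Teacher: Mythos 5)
Your high-level idea --- build a sequence of $zm(x_{|I|})$-paths in $G[Q]$ using the red-clique edges $e^1_{ij},e^3_{ij}$ as shortcuts so that consecutive lengths differ by at most $3\eps^3 k$, the first length is small and the last is at least $k/\eps^4$ --- is exactly the right one and is what the paper does. However, the paper's realization is genuinely different and much simpler than yours: it processes the indices of $I$ \emph{in order}, growing a path $R_i$ through $Q_1,\ldots,Q_{i}$ by alternating the $Q^1/Q^3$ sides, and closes each $R_i$ into an $R_i'$ by one jump $e^b_{i,|I|}$ into $Q_{|I|}$. Then $R_i'$ and $R_{i+1}'$ differ only inside $Q_i\cup Q_{i+1}\cup Q_{|I|}$, so their lengths are within $3\eps^3 k$; $R_1'$ is short; and $R_{|I|-1}'$ contains every $Q^2_i$, so it has length at least $(|I|-1)\eps^3 k/3\geq k/\eps^4$. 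No monotonicity, no eligibility rules, no index-availability bookkeeping are needed. Note also that monotonicity is not actually required for $3\eps^3k$-density; it suffices that the differences are bounded and the sequence starts low and ends high.

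Your insertion scheme, as written, has a concrete gap: the claimed length changes are wrong because you ignore the drift of the shortcut endpoints inside $Q_a$ and $Q_b$. When you replace $e^?_{a,b}$ by $e^?_{a,j}$ and $e^?_{j,b}$, the endpoints of these edges in $Q^?_a$ and $Q^?_b$ are in general different vertices from the endpoint of $e^?_{a,b}$ (the red coloring only guarantees the \emph{existence} of such edges, not any coherence of their endpoints), so the sub-walks inside $Q_a$ and $Q_b$ each change by up to $\eps^3 k/3$ in either direction. Hence the ``same-type'' increment is not confined to $[1,\eps^3k/3]$ and can be negative, and likewise for the cross-type step; the monotonicity you rely on to certify $|P_T|\geq k/\eps^4$ simply does not hold. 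Moreover, the eligibility restriction ``followed by an $e^3$-type shortcut'' is not the right criterion; what controls whether $Q_b$'s internal walk lengthens or shortens is whether the two shortcuts flanking $Q_b$ have the \emph{same} type, so an $e^1$-type shortcut followed by an $e^3$-type one is still bad. Finally, the claim that eligible shortcuts and unused intermediate indices persist throughout is asserted via ``a pigeonhole argument'' without detail; after a cross-type insertion both new flanking shortcuts have opposite type to their successors, so an additional same-type insertion is needed to recreate eligibility, and one would have to track where these insertions land and that the relevant gaps still contain fresh indices. All of these obstacles disappear in the paper's ordered construction, which is why it is preferable.
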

\begin{proof}
Denote by $R_1$ the path consisting only of vertex $z$. We now recursively define for each $i< |I|$ a path $R_i$ whose one endpoint is $z$ and the other endpoint $z_i$ lies in either $Q_i^1$ or $Q_i^3$ (see Figure~\ref{fig:jumpswithQ} for an illustration). First, let $z_1=z$.
Suppose $z_i$ is in $Q_i^a$ for some $a\in\{1,3\}$, and let $b\in\{1,3\}\setminus\{a\}$;  let $R_{i+1}$ be the path obtained from $R_{i}$ by concatenating to it the path contained in $Q_i$ which starts at $z_i$, goes through $Q_i^2$ and touches the edge $e^b_{i,i+1}$, and also add the edge $e^b_{i,i+1}$ itself. 

Now we also define paths $R_i'$ for each $i \leq |I|-1$, obtained from $R_i$ as follows. If $z_i\in Q^{a}_i$, again we let $b\neq a$ and $b\in\{1,3\}$. Let $R_i'$ be the path obtained by concatenating with $R_i$ the path starting at $z_i$, going through $Q_i$ until the edge $e^b_{i,|I|}$, then also adding this edge $e^b_{i,|I|}$ itself, together with the path in $Q_{|I|}$ which connects the endpoint of this edge with $m(x_{|I|})$.

Note that the length of two paths $R_i'$ and $R_{i+1}'$ differs by at most $|Q_i|+|Q_{i+1}|+|Q_I|\leq 3\varepsilon^3k$, since the only vertices which belong to exactly one of these two paths are contained in $G[Q_i\cup Q_{i+1}\cup Q_I]$. Furthermore, the length of the first path $R'_1$ obtained by our procedure is at most $|Q_1| + |Q_{|I|}| \leq 2 \eps^3 k$ and the length of the last path $R_{|I|-1}'$ is at least $(|I|-1)\varepsilon^3k/3\geq \frac{k}{\varepsilon^4}$, since it contains all paths $Q_i^2$ for all $i\leq |I|-1$. This implies that the path $Q$, defined as the path between $z=z_1$ and $m(x_{|I|})$ is such that its endpoints are $3\varepsilon^3k$-dense in $\left[0,\frac{k}{\varepsilon^4}\right]$ in the graph $G[Q]$. 
 \end{proof}
 
 \begin{figure}[ht]
    \centering
    \begin{tikzpicture}[scale=0.62,main node/.style={circle,draw,color=black,fill=black,inner sep=0pt,minimum width=7pt}]
    
    	\tikzset{rectangle/.append style={draw=brown, ultra thick, fill=blue!30}}

    \foreach \i in {1,...,4}
     {
     \node[main node, scale=0.5, color=black, opacity=1] (a\i) at (5.7*\i+1,0){};
     \node[main node, scale=0.5, color=black, opacity=1] (b\i) at (5.7*\i+2,0){};
     
     \node[main node, scale=0.5, color=black, opacity=1] (c\i) at (5.7*\i+3,0){};
     \node[main node, scale=0.5, color=black, opacity=1] (d\i) at (5.7*\i+4,0){};
     
     \draw[line width= 1 pt] (a\i) to (d\i);
     \node[main node, scale=0.5, color=blue, opacity=1] (z1) at (6.7,0){};
  \node[main node, scale=0.5, color=blue, opacity=1] (t1) at (9.2,0){};

    \draw[line width= 2 pt, color=blue] (z1) to (t1); 
     \ifthenelse {\i>1 \and \i<5}
     {
     \pgfmathtruncatemacro\aa{(-1)^\i}
      \node[main node, scale=0.5, color=blue, opacity=1] (z\i) at (5.7*\i+2.5+\aa,0){};
      \node[main node, scale=0.5, color=blue, opacity=1] (t\i) at (5.7*\i+2.5-\aa,0){};
      \draw[line width= 2 pt, color=blue] (z\i) to (t\i);
     }
     {}
      \ifthenelse {\i<5 \and \i>1}
      {
         \pgfmathtruncatemacro\aa{\i-1}
         \draw[line width= 2 pt, color=blue] (z\i) to [bend right = 50] (t\aa);
      }
      {}
    
     }
     
     \foreach \i in{1,...,4}
     {
     \node[main node, scale=0.5, color=black, opacity=1] (a\i) at (5.7*\i+1,0){};
     \node[main node, scale=0.5, color=black, opacity=1] (b\i) at (5.7*\i+2,0){};
     \node[main node, scale=0.5, color=black, opacity=1] (c\i) at (5.7*\i+3,0){};
     \node[main node, scale=0.5, color=black, opacity=1] (d\i) at (5.7*\i+4,0){};
     
     \node[scale=0.8, color=black, opacity=1] (t) at (5.7*\i+1.5,-0.6){$Q_\i^3$};
     \node[scale=0.8, color=black, opacity=1] (t) at (5.7*\i+2.5,-0.6){$Q_\i^2$};
     \node[scale=0.8, color=black, opacity=1] (t) at (5.7*\i+3.5,-0.6){$Q_\i^1$};
     }

     \node[main node, scale=0.5, color=black, opacity=1] (a) at (31,0){};
     \node[main node, scale=0.5, color=black, opacity=1] (b) at (32,0){};
     \node[main node, scale=0.5, color=black, opacity=1] (c) at (33,0){};
     \node[main node, scale=0.5, color=black, opacity=1] (d) at (34,0){};
     
     \node[scale=0.8, color=black, opacity=1] (t) at (31.5,-0.6){$Q_{|I|}^3$};
     \node[scale=0.8, color=black, opacity=1] (t) at (32.5,-0.6){$Q_{|I|}^2$};
     \node[scale=0.8, color=black, opacity=1] (t) at (33.5,-0.6){$Q_{|I|}^1$};

 \draw[line width= 1 pt] (a) to (d);
 \draw[color=red,line width= 2 pt, opacity =0.8] (t4) to [ bend left = 70](31.5,0);
 \draw[color=red,line width= 2.5 pt, opacity=0.8] (34,0) to (31.5,0);
 
 \node[main node, scale=0.5, color=black, opacity=1] (a) at (31,0){};
     \node[main node, scale=0.5, color=black, opacity=1] (b) at (32,0){};
     \node[main node, scale=0.5, color=black, opacity=1] (c) at (33,0){};
     \node[main node, scale=0.5, color=black, opacity=1] (d) at (34,0){};

    \node[scale=0.8, color=black, opacity=1] (t) at (6.7,0.6){$z_1$};
    \node[scale=0.8, color=black, opacity=1] (t) at (14.9,0.6){$z_2$};
    \node[scale=0.8, color=black, opacity=1] (t) at (18.6,0.6){$z_3$};
    \node[scale=0.8, color=black, opacity=1] (t) at (26.3,0.6){$z_4$};
    
    \node[scale=0.8, color=black, opacity=1] (t) at (34.3,0.6){$m(x_{|I|})$};
    
    \node[scale=1.2, color=black, opacity=1] (t) at (28.7,0){$\ldots$};
    
    \node[scale=1, color=black, opacity=1] (t) at (28.3,2.85){$e_{4,|I|}^3$};
     
      \node[scale=0.8, color=black, opacity=1] (t) at (10.7,0){$m(x_1)$};

    \end{tikzpicture} 
    \caption{
    The thick blue path represents path $R_4$, and adding to it the red path creates $R_4'$.
    }
    
    \label{fig:jumpswithQ}
\end{figure}

Let $Q^*$ be the path in $C$ spanned by the interval $[1,z]\cup \left[m(y_{|I|}),N\right]$. By applying Lemma~\ref{lem:easyjump}, we get a path $Q'$ of length at most $2k$ in $G[Q^*]$ with the same endpoints as $Q^*$. Recalling that the pair of vertices $x_{|I|},y_{|I|}$ is connected by paths of all lengths in $\left[\frac{\varepsilon k}{100},\frac{\varepsilon k}{50}\right]$ in the subgraph $G[S]$, and Lemma~\ref{lem:jump with Q} above, we are done by \Cref{obs:combining}. Indeed, we apply it to $G$ with $v_1=z$, $v_2=m(y_{|I|})$ and $v_3=m(x_{|I|})$, while $S_1$ is the set of internal vertices of $Q^*$, $S_2=S$ and $S_3$ are the internal vertices of $Q$ to get all cycle lengths in the interval $\left[(2+\eps) k,\frac{1000}{\varepsilon^2}k\right]$.
\hfill \qedsymbol

\section{Concluding remarks}\label{sec:concludingrem}

In this paper we proved that every Hamiltonian graph on $n \geq 2k^2+o(k^2)$ vertices with independence number $k$ is pancyclic, which is tight up to the $o(k^2)$ error term. Furthermore, our methods allow us to give a short proof of Erd\H{o}s's conjecture that $n = \Omega(k^2)$ vertices are enough for $G$ to be pancyclic. For this, we first note that while getting a bound of $n = \Omega(k^3)$,  Keevash and Sudakov \cite{keevash2010pancyclicity} implicitly proved the following result.
\begin{lem}[\cite{keevash2010pancyclicity}]\label{lem:ks}
There exists a large constant $C$ such that every Hamiltonian graph on $n \geq Ck^2$ vertices with independence number $k$, contains all cycle lengths in $[3,n/C]$.
\end{lem}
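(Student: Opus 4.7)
The plan is to prove this lemma by a streamlined version of the proof of \Cref{thm:main}, handling the range $[3,n/C]$ in two stages and exploiting two comforts absent in the tight regime: we only need to reach length $n/C$ rather than $n$, and $C$ may be taken to be an arbitrarily large absolute constant.

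For cycles of length $\ell\in[3,(2+\eps)k]$, with any small fixed $\eps>0$, the lower-range step of the proof of \Cref{thm:main} applies unchanged: the cycle-complete Ramsey bounds of \Cref{thm:erdos cycle-complete} and \Cref{precise cycle-complete}, applied with $s=k+1$, immediately yield a $C_\ell$ since $n\geq Ck^2$ exceeds the relevant Ramsey thresholds.

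For the longer range $\ell\in[(2+\eps)k,n/C]$, I would adapt the scheme of Section~\ref{sec:middlerange}. First apply \Cref{lem:partitionintomatchingcycle} to decompose $V(G)=V(C^\star)\sqcup S$ with $C^\star$ a cycle of length $(1-o(1))n$, $|S|=\Theta(n)$, and a matching $M\subseteq E[C^\star,S]$ covering $S$. Then apply \Cref{lem:pathlengthsininterval} to $G[S]$ to find a pair $x',y'\in S$ joined by paths in $G[S]$ of every length in an interval $I_S$ of width $\Omega(|S|/k)=\Omega(k)$. Let $x,y\in V(C^\star)$ be the matching partners of $x',y'$ and let $P$ be the longer $xy$-arc of $C^\star$, of length $(1-o(1))n$. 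Next, I would construct a family of $xy$-paths inside $G[P]$ whose lengths are dense in $[O(k),|P|]$ by iterating \Cref{lem:jumpwithzigzag} with $U=\emptyset$: take $c=1$ while $|P|\gtrsim 2k^2$, then gradually increase $c$ up to $\Theta(k)$ as the path shortens, always maintaining the hypothesis $c(|P|/2k-1)>k$. At every step the length decreases by $4c-3$, which by our choice of $c$ never exceeds the width of $I_S$. Finally, \Cref{obs:combining} applied with $t=2$, absorbing the two matching edges $xx',yy'$ into the $xy$-path through $S$, shows that the resulting cycle lengths cover the integer set $\{2\}+I_S+I_P$, which includes the entire target interval $[(2+\eps)k,n/C]$.

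The main technical obstacle is to schedule the parameter $c$ in the shortening procedure so that consecutive path lengths never differ by more than $|I_S|$; otherwise the sumset $\{2\}+I_S+I_P$ would have gaps in the target range. This scheduling is essentially automatic because $c$ only needs to grow once $|P|$ has dropped below $\Theta(k^2)$, and the width $|I_S|=\Omega(k)$ comfortably exceeds the required step size $4c-3$ throughout. The generous slack in the constant $C$ absorbs all losses, so no delicate boundary analysis (unlike the tight proof of \Cref{thm:main}) is needed.
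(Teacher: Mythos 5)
The paper does not prove this lemma; it is cited as an implicit consequence of the argument in \cite{keevash2010pancyclicity}. Your proposal therefore gives a genuinely different, self-contained proof built from the paper's own toolbox, and the scheme is sound. The simplification you exploit is that with $n\geq Ck^2$, the set $S$ from \Cref{lem:partitionintomatchingcycle} has size $\Theta(n)$, so the interval $I_S$ from \Cref{lem:pathlengthsininterval} has width $\Theta(n/k)=\Theta(Ck)$, which licenses taking the shortening parameter $c$ in \Cref{lem:jumpwithzigzag} as large as $\Theta(k)$ while keeping every step $4c-3$ below $|I_S|$. That keeps the hypothesis $c\left(|P|/2k-1\right)>k$ alive all the way down to $|P|=\Theta(k)$, and the resulting dense family of $xy$-path lengths, combined with $I_S$ via \Cref{obs:combining}, reaches down into the Ramsey range $[3,\Theta(Ck)]$ supplied by Theorems~\ref{thm:erdos cycle-complete} and~\ref{precise cycle-complete}. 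This growing-$c$ trick is precisely what lets you skip the paper's delicate \Cref{sec:middlerange}: in the tight regime $n=(2+\eps)k^2$, one has $|S|=\Theta(\eps k^2)$ and hence $|I_S|=\Theta(\eps k)$, so $c$ cannot exceed $\Theta(\eps k)$, the shortening stalls around $|P|\approx 2k/\eps$, and an entirely different mechanism (induced increasing paths plus a Ramsey coloring) is needed to bridge the remaining gap down to $(2+\eps)k$. One minor slip in your write-up: without the pigeonholing of matching endpoints into a short arc as done in \Cref{sec:upperrange}, the longer $xy$-arc $P$ is only guaranteed to have length $\geq|C^\star|/2\approx n/2$, not $(1-o(1))n$; this is harmless here since $n/2\gg n/C$, but the stated length is off. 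In short, the paper's route is a citation; yours is a reproof that reuses \Cref{lem:partitionintomatchingcycle}, \Cref{lem:pathlengthsininterval}, and \Cref{lem:jumpwithzigzag} and makes visible why the hardest part of the proof of \Cref{thm:main} disappears once the constant is allowed to be large.
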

\noindent This reduces Erd\H{o}s's conjecture to the following problem. 
\begin{itemize}
    \item[($\ast$)] Does there exist $C' >0$ such that every Hamiltonian graph on $n \geq C'k^2$ vertices with independence number $k$, contains a cycle of length $n-1$? 
\end{itemize}
\noindent Indeed, suppose that the above is true for some large constant $C'$ and let $G$ be a Hamiltonian graph on $n$ vertices with independence number $k$. Then, combining this with the above lemma of Keevash and Sudakov, it follows that if $n \geq CC'k^2$ then $G$ is pancyclic, thus proving Erd\H{o}s's conjecture. Indeed, note that by the lemma above, $G$ contains all cycle lengths up to $n/C$ and one can see that it contains all cycle lengths from $n/C$ to $n$ by iteratively applying the assumption that whenever $n' \geq C'k^2$, there is a cycle of length $n'-1$. 
The previous results 
\cite{keevash2010pancyclicity}, \cite{lee2012hamiltonicity} and \cite{dankovics2020low} are all improvements towards question ($\ast$) above. As discussed in the beginning of Section \ref{sec:upperrange}, applying Proposition \ref{lem:jumpwithzigzag} with $c=1$ and $U = \emptyset$ solves this problem in the following stronger form.
\begin{thm}\label{thm:outlinen-1}
Every Hamiltonian graph on $n > 2k^2+2k$ vertices with independence number $k$, contains a cycle of length $n-1$.
\end{thm}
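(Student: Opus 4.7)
The plan is to reduce the statement to a single, clean application of Proposition \ref{lem:jumpwithzigzag}, exploiting that with the trivial choices $c=1$ and $U=\emptyset$, the proposition's hypothesis simplifies to exactly the numerical threshold demanded by the theorem, while its conclusion yields a path shorter by precisely one.

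First, fix any Hamilton cycle $C$ of $G$ and select an arbitrary edge $xy\in E(C)$; let $P:=C-\{xy\}$ be the Hamilton path on all $n$ vertices with endpoints $x$ and $y$. I then intend to invoke Proposition \ref{lem:jumpwithzigzag} on $P$ with $c=1$ and $U=\emptyset$. Under these choices the numerical condition
\[
c\left(\frac{|P|-(4c-1)|U|}{2k}-1\right)>k
\]
collapses to $\frac{|P|}{2k}-1>k$, i.e.\ $|P|>2k^{2}+2k$, which holds by our assumption $n>2k^{2}+2k$. The proposition therefore produces an $xy$-path $P'$ with $V(P')\subseteq V(P)$ and
\[
|P|-(4c-3)\le|P'|<|P|,
\]
which for $c=1$ forces $|P'|=|P|-1=n-1$.

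Finally, since $xy$ is an edge of the original Hamilton cycle (hence of $G$) and both endpoints of $P'$ are $x$ and $y$, adjoining $xy$ to $P'$ closes it into a cycle on exactly $n-1$ vertices, which is a cycle of length $n-1$ as required.

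There is essentially no obstacle here: the real work was done in Proposition \ref{lem:jumpwithzigzag}, and the only point to verify is the arithmetic that the threshold $|P|>2k^{2}+2k$ lines up exactly with the hypothesis $n>2k^{2}+2k$; everything else is a one-line deduction.
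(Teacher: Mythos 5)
Your proof is correct and is precisely the paper's argument: the paper also notes (in Sections 3.2 and 4) that applying Proposition~\ref{lem:jumpwithzigzag} with $c=1$ and $U=\emptyset$ to a Hamilton path $P$ yields an $xy$-path on $n-1$ vertices whenever $n>2k^2+2k$, and closing it up with the edge $xy$ gives the desired cycle of length $n-1$.
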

Let us note that although \Cref{thm:outlinen-1} shows the existence of a cycle of length $n-1$ already with $n>2k^2+2k$, this is not sufficient to prove that $n>2k^2+\varepsilon k^2$ implies pancyclicity. At this threshold, \Cref{lem:ks} does not apply, so one needs a different argument to find the cycle lengths in the interval $[3,2k^2+2k]$. It turns out that in this setting, the cycle lengths which are hardest to find are those around $2k$, that is, precisely the cycle lengths which are missed by the lower bound construction given in the introduction. Finding them is the most technical part of our proof, given in \Cref{sec:middlerange}.

A very interesting open question is to understand the best bound on the number of vertices $n$ in ($\ast$) which guarantees the cycle of length $n-1$. Here the answer might be linear in $n$ as the following question asked in \cite{keevash2010pancyclicity} suggests.
\begin{prob}
Does there exist a constant $C$ such that every Hamiltonian graph with independence number $k$ and $n \geq Ck$ vertices contains a cycle of length $n-1$?
\end{prob}

\noindent\textbf{Acknowledgements.} We thank the referee for their careful reading of the paper and for useful comments.

\end{document}